\newtheorem{theorem}{Theorem}
\newtheorem{lemma}[theorem]{Lemma}
\newtheorem{proposition}[theorem]{Proposition}
\newtheorem{corollary}[theorem]{Corollary}
\theoremstyle{definition}
\newtheorem{example}[theorem]{Example}
\newtheorem{remark}{Remark}
\newtheorem*{case}{Case}
\newtheorem*{subcase}{Subcase}
\newcommand{\Aut}{{\mathrm{Aut}}}
\newcommand{\Inn}{{\mathrm{Inn}}}
\newcommand{\Out}{{\mathrm{Out}}}
\newcommand{\Mon}{{\mathrm{Mon}}}
\newcommand{\normal}{\trianglelefteq}
\newcommand{\la}{\langle}
\newcommand{\ra}{\rangle}
\begin{document}
\title{Totally symmetric dessins with nilpotent automorphism groups of class three}
\author[1,2]{Na-Er Wang\thanks{wangnaer@zjou.edu.cn}}
\author[3,4]{Roman Nedela\thanks{nedela@savbb.sk}}
\author[1,2]{Kan Hu\thanks{hukan@zjou.edu.cn}}
\affil[1]{School of Mathematics, Physics and Information Science, Zhejiang Ocean University, Zhoushan, Zhejiang 316022, People's Republic of China}
\affil[2]{Key Laboratory of Oceanographic Big Data Mining \& Application of Zhejiang Province, Zhoushan, Zhejiang 316022, People's Republic of China}
\affil[3]{Faculty of Natural Sciences, Matej Bel University, Tajovsk\'eho 40, 974 01, Bansk\'a Bystrica, Slovak Republic}
\affil[4]{Institute of Mathematics and Computer Science, Slovak Academy of Sciences, Bansk\'a Bystrica, Slovak Republic}
\maketitle
\begin{abstract}
A dessin is a 2-cell embedding of a connected bipartite graph into an orientable closed surface. An automorphism of a dessin is a permutation of the edges of the underlying graph which preserves the colouring of the vertices and extends to an orientation-preserving self-homeomorphism of the supporting surface. A dessin is regular if its automorphism group is transitive on the edges, and a regular dessin is totally symmetric if it is invariant under all dessin operations.  Thus totally symmetric dessins possesses the highest level of external symmetry. 
In this paper we present a classification of totally symmetric dessins with a nilpotent automorphism group of class three.\\[2mm]
\noindent{\bf Keywords} regular dessin, dessin operation,  external symmetry.\\
\noindent{\bf MSC(2010)} Primary 14H57; Secondary 14H37, 20B25, 30F10
\end{abstract}

\section{Introduction}
 By Bely\v{\i}'s theorem~\cite{Belyi1979}, a compact Riemann surface $S$, regarded as an algebraic curve, is definable over the field $\mathbb{\bar Q}$ of algebraic numbers if and only if  there is a non-constant meromorphic function $\beta$ from $S$ to the Riemann sphere with at most three critical values. By composing $\beta$ with a M\"obius transformation when necessary, these three critical points can be chosen as $\{0,1,\infty\}$. Joining $0$ and $1$ with a single edge we obtain the trivial bipartite map on the Riemann sphere, which lifts along $\beta$ to a bipartite map on $S$: The embedded bipartite graph is  the preimage $\beta^{-1}[0,1]$ of the closed interval $[0,1]$, and the black and the white vertices are the preimages of $0$ and $1$ respectively.

The absolute Galois group $\mathbf{G}:={\rm Gal}(\mathbb{\bar Q}/\mathbb{Q})$ has a natural action on the coefficients of polynomials and rational functions which define the Riemann surfaces and Bely\v{\i} functions. In particular, it induces an action of $\mathbf{G}$ on the bipartite maps. Grothendieck showed that this action is faithful~\cite{Grothendieck1997}, so we have a combinatorial approach to $\mathbf{G}$.
 Following Grothendieck such bipartite maps will be called dessins.

A dessin $D$ is regular if the associated Bely\v{\i} function $\beta$ is a regular covering, in which case the automorphism group $\Aut(D)$ of $D$, the group of covering transformations of $\beta$, acts transitively on the edges of $D$. Recently, Gonz\'alez-Diez and Jaikin-Zapirain have shown that the action of $\mathbf{G}$ on dessins remains faithful when restricted to regular dessins~\cite{GJ2013}.

An important problem in this field is the classification of regular dessins, typically imposing certain conditions on the supporting surfaces, the embedded graphs or the underlying automorphism groups~\cite{BJ2001,CJSW2013,CJSW2009,Du,HNW2014,
HNW2014-3,Jones2010,Jones2013,JSW2007,MNS2012}.

In this paper we investigate regular dessins with nilpotent automorphism groups, with emphsis on totally symmetric ones, i.e., regular dessins which are invariant under all dessin operations. As shown by Jones in \cite{Jones2013}, every nilpotent regular dessin is covered by a nilpotent totally symmetric dessin. Totally symmetric dessins with nilpotent automorphism groups of class one and two have been classified in \cite{HNW2014, HNW2014-3}. The main result of the present paper is a classification of totally symmetric dessins with nilpotent automorphism groups of nilpotent class three, see Theorem~\ref{MAIN}.

\section{Algebraic theory of dessins}
In this section we outline the algebraic theory of regular dessin and dessin operations. For more details the reader is referred to \cite{JP2010} and \cite{Jones2013}.

Each dessin $D$ on an orientable surface $S$ determines a two-generator transitive permutation group $\Mon(D)=\langle \rho,\lambda\rangle$ on the edge set $E$ of $D$: The global orientation of $S$ induces two permutations $\rho$ and $\lambda$ which cyclically permute the edges incidenct with each black and each white vertice of $D$; since the underlying graph of $D$ is connected, the group $\Mon(D)=\langle \rho,\lambda\rangle$ is transitive. Let 
\[
F_2=\langle X,Y\mid -\rangle,
\]
i.e., $F_2$ is the free group of rank two. Then we have a transitive permutation representation of $F_2$ defined by 
\[
F_2\to\Mon(D), X\mapsto \rho,Y\mapsto \lambda.
\]
 In this representation the black and the white vertices correspond  to the orbits of the subgroups $\langle X\rangle$ and $\langle Y\rangle$, respectively, and the faces correspond to the oribits of $\langle XY\rangle$, with incidence given by non-empty intersection. 

The stabilizer $N:=\mathrm{Stab}_{F_2}(e)$  in $F_2$ of an edge $e$ is a subgroup of finite index in $F_2$. This subgroup is uniquely determined up to conjugacy. It will be called the \textit{dessin subgroup} associated with $D$.  Moreover, an automorphism $\sigma$ of $F_2$ sends $N$ to $N^{\sigma}$, and hence transforms $D$ to a dessin $D^{\sigma}$. In particular if $\sigma$ is an inner automorphism of $F_2$, then $N$ is conjugate to $N^{\sigma}$, and hence $D$ is isomorphic to $D^{\sigma}$. It follows that the outer automorphism group 
\[
\Omega:=\Out(F_2)=\Aut(F_2)/\Inn(F_2)
\]
of $F_2$ acts as the \textit{group of dessin operations} on the isomorphism classes of dessins. It is well known that $\Aut(F_2)$ is genearated by the elementary Nielsen transformations of the form
\begin{align*}
&\tau: X\to Y, Y\to X,  &&\pi_1:X\to X, Y\to Y^{-1}, &&\pi:X\to X^{-1}, Y\to Y,\\
&\zeta:X\to XY, Y\to Y, &&\eta:X\to X, Y\to YX. &&
\end{align*} 
This is not a minimal set of generators. For instance we have
\begin{proposition}
$\Aut(F_2)=\langle\tau,\pi,\zeta\rangle$ and $\Omega=\langle \omega_{\tau},\omega_{\pi},\omega_{\zeta}\rangle$ where $\omega_{\tau},\omega_{\pi},\omega_{\zeta}$ are dessin operations induced by the automorphisms $\tau,\pi,\zeta$ of $F_2$.
\end{proposition}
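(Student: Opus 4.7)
The plan is to invoke the classical theorem of Nielsen that $\Aut(F_2)$ is generated by the five elementary Nielsen transformations $\tau,\pi_1,\pi,\zeta,\eta$ listed above, which the statement takes as known; it then suffices to show that the two apparently redundant generators $\pi_1$ and $\eta$ already lie in the subgroup $H:=\langle\tau,\pi,\zeta\rangle$ of $\Aut(F_2)$.

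The key step is the observation that $\tau$ is an involution which swaps the roles of $X$ and $Y$, so conjugation by $\tau$ should intertwine the Nielsen transformations acting on the first coordinate with those acting on the second. Concretely, I would verify the two identities
\[
\pi_1 \;=\; \tau\,\pi\,\tau, \qquad \eta \;=\; \tau\,\zeta\,\tau
\]
by evaluating both sides on the free generators $X$ and $Y$. For the first, reading right-to-left: $\tau$ sends $X\mapsto Y$, then $\pi$ fixes $Y$, then $\tau$ sends $Y\mapsto X$, giving $X\mapsto X$; and $\tau$ sends $Y\mapsto X$, then $\pi$ inverts $X$, then $\tau$ sends $X^{-1}\mapsto Y^{-1}$, giving $Y\mapsto Y^{-1}$, which matches the definition of $\pi_1$. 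The second identity is verified analogously: $\tau\zeta\tau$ fixes $X$ and sends $Y\mapsto X\mapsto XY\mapsto YX$, which is exactly $\eta$.

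Having placed $\pi_1,\eta\in H$, the generating set $\{\tau,\pi_1,\pi,\zeta,\eta\}$ is contained in $H$, so $H=\Aut(F_2)$, proving the first assertion. The second assertion follows at once: the quotient map $\Aut(F_2)\twoheadrightarrow\Omega=\Aut(F_2)/\Inn(F_2)$ is surjective, and the image of a generating set is a generating set, so $\Omega=\langle\omega_\tau,\omega_\pi,\omega_\zeta\rangle$.

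There is essentially no obstacle here beyond bookkeeping; the only point that requires care is fixing a convention for composition (I read $\alpha\beta$ as \enquote{apply $\beta$, then $\alpha$}, as in standard function composition) and, correspondingly, checking that the two displayed identities really produce $\pi_1$ and $\eta$ as stated rather than their inverses or some conjugate. With the convention pinned down, both verifications reduce to two short substitutions each, and no further machinery is needed.
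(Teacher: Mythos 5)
Your proposal is correct and follows essentially the same route as the paper: the paper's proof consists precisely of the two identities $\pi_1=\tau\pi\tau$ and $\eta=\tau\zeta\tau$ (which you verify in detail, with the composition convention made explicit), followed by passing the generating set through the quotient $\Aut(F_2)\to\Omega$. No discrepancies to report.
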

 \begin{proof}
Note that $\pi_1=\tau\pi\tau$ and $\eta=\tau\zeta\tau$, we have $\Aut(F_2)=\langle \tau,\pi,\zeta\rangle$, and hence $\Omega=\langle \omega_{\tau},\omega_{\pi},\omega_{\zeta}\rangle$.
\end{proof}

The operation $\omega_\tau$ on dessins transposes the black and white vertices while preserves the faces and orientation. The operation $\omega_{\pi}$ reverses the orientation around the black vertices while preserves it around the white vertices. It is sometimes called the Petrie operation, since it transposes the faces with the Petrie polygons (the zig-zag paths). Finally, let  $\iota=\pi\pi_1$, then the operation $\omega_{\iota}$ transforms a dessin into its mirror image by reversing the orientation around the black and white vertices.

The most important dessins are regular dessins, in which both $\Mon(D)$ and $\Aut(D)$ are regular permutation groups on $E$, and the associated dessin subgroups $N$ are normal in $F_2$. In particular we have
\[
\Mon(D)\cong\Aut(D)\cong F_2/N.
\]
In a regular dessin $D$ one may identify $D$ with the underlying set of $G:=\Aut(D)$, and the groups $\Aut(D)$ and $\Mon(D)$ with the left and right regular representations of $G$, respectively. So each regular dessin $D$ gives rise to an \textit{algebraic dessin} -- a triple $(G,x,y)$ where $x$ and $y$ generate the stabilizers of a pair of  adjacent (black and white) vertices in $G$, and hence $G=\langle x,y\rangle$. 

 Moreover, for an automorphism $\sigma\in\Aut(F_2)\backslash\Inn(F_2)$, if the normal subgroup $N$ in $F_2$ is $\sigma$-invariant, i.e., $N=N^{\sigma}$, then the assoicated regular dessin $D$ is invariant under the dessin operation $\omega_\sigma$, i.e., $D\cong D^{\sigma}$. In this case we say that the regular dessin $D$ posseses the \textit{external symmetry} $\sigma$.
More specifically, a regular dessin $D$ will be called \textit{symmetric} (resp. \textit{self-Petrie-dual}, \textit{reflexible}) if it is invariant under the operation $\omega_{\tau}$ (resp. $\omega_{\pi}$, $\omega_{\iota}$). A regular dessin which is invariant under all dessin operations will be called \textit{totally symmetric}. It follows that a regular dessin is totally symmetric if and only if the associated dessin subgroup $N$ is characteristic in $F_2$, in which case we have a natural homomorphism $\Aut(F_2)\to\Aut(G)$ where $G\cong F_2/N$. In particular each of the following assignments extends to an automorphism of $G$ (By abuse of notation we have retained the Greek letters used above):
\begin{equation}\label{MinG}
\begin{aligned}
\tau:& x\mapsto y, \quad y\mapsto x;   \\
 \pi:& x\mapsto x^{-1},\quad y\mapsto y;\\
  \zeta:&x\mapsto xy,\quad y\mapsto y.
\end{aligned}
\end{equation}
Define $ \iota=\pi\tau\pi\tau$ and $\nu=\iota\tau\zeta\tau\iota$. Then
\[
\iota: x\mapsto x^{-1},\quad y\mapsto y^{-1}\quad\text{and}\quad  \nu:x\mapsto x,\quad y\mapsto xy.
\]

\begin{lemma}\label{INTERSECTION}
Let $(G,x,y)$ be a totally symmetric dessin, then $\langle y\rangle\cap G'=\langle y\rangle\cap \langle x, G'\rangle.$
\end{lemma}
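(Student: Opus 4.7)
The inclusion $\langle y\rangle\cap G' \subseteq \langle y\rangle\cap \langle x,G'\rangle$ is immediate since $G'\subseteq \langle x,G'\rangle$, so all the work is in the reverse inclusion. My plan is to pass to the abelianisation $\bar G:=G/G'$ (writing $\bar x,\bar y$ for the images of $x,y$) and exploit the fact that total symmetry provides enough external automorphisms of $G$ to force any ``$\bar y$-power that is also an $\bar x$-power'' to be trivial.

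Concretely, suppose $z\in \langle y\rangle\cap\langle x,G'\rangle$. Since $G'$ is normal, $\langle x,G'\rangle=\langle x\rangle G'$, so I can write $z=y^k=x^m g$ with $g\in G'$, which gives the key relation
\[
\bar y^{\,k}=\bar x^{\,m}\quad\text{in }\bar G.
\]
The first step is to apply the automorphism $\nu$ of $G$ (which exists because $D$ is totally symmetric and $\nu=\iota\tau\zeta\tau\iota$ is built from $\tau,\pi,\zeta$): it sends $x\mapsto x$, $y\mapsto xy$, so it induces an automorphism of $\bar G$ with $\bar x\mapsto\bar x$, $\bar y\mapsto\bar x\bar y$. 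Applying it to the above relation, and using that $\bar G$ is abelian,
\[
(\bar x\bar y)^k=\bar x^m \;\Longrightarrow\; \bar x^{\,k}\bar y^{\,k}=\bar x^{\,m}\;\Longrightarrow\;\bar x^{\,k+m}=\bar x^{\,m}\;\Longrightarrow\;\bar x^{\,k}=1.
\]

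The second step is to apply $\tau$ (swap $x\leftrightarrow y$), again available by total symmetry. It descends to the automorphism of $\bar G$ swapping $\bar x$ and $\bar y$, so from $\bar x^{\,k}=1$ we obtain $\bar y^{\,k}=1$, i.e.\ $z=y^k\in G'$, finishing the reverse inclusion.

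The whole proof is essentially one short calculation in the abelianisation, so no real obstacle arises; the only point to be careful about is to verify that $\nu$ indeed descends to the claimed map on $\bar G$, which is built into \eqref{MinG} together with the definition of $\nu$. Once this is recorded, the two-line abelian computation combined with the $\tau$-symmetry delivers the equality.
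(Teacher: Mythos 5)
Your proof is correct. The only hypotheses it uses are available in the paper's setting: since $D$ is totally symmetric, $\tau$ and $\nu=\iota\tau\zeta\tau\iota$ are automorphisms of $G$; since $G'$ is characteristic, both descend to $\bar G=G/G'$, inducing $\bar x\mapsto\bar x,\ \bar y\mapsto\bar x\bar y$ and $\bar x\leftrightarrow\bar y$ respectively; and $\langle x,G'\rangle=\langle x\rangle G'$ because $G'\normal G$, so every element of $\langle y\rangle\cap\langle x,G'\rangle$ does give a relation $\bar y^{\,k}=\bar x^{\,m}$. From there the cancellation $\bar x^{\,k}\bar y^{\,k}=\bar x^{\,m}=\bar y^{\,k}$ forces $\bar x^{\,k}=1$, and $\tau$ transfers this to $\bar y^{\,k}=1$, which is exactly $y^k\in G'$.

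The route is genuinely different from, and simpler than, the paper's. The paper also applies $\tau$ and $\nu$, but it works inside $G$ itself with a generator $y^{p^m}$ of the intersection (so implicitly in the $p$-group setting in which the lemma is later applied), keeps track of the commutator error terms via the Hall--Petrescu formula $(xy)^{p^m}=x^{p^m}y^{p^m}g_2$ with $g_2\in G'$, uses the minimality of $m$ to get $m\le n$, and then splits into the cases $m=n$ and $m<n$, the latter requiring a coprimality argument with $1-r^2p^{2n-2m}$. Your version passes to the abelianisation at the outset, where Hall--Petrescu degenerates to the trivial identity $(\bar x\bar y)^k=\bar x^{\,k}\bar y^{\,k}$, so the case analysis, the minimality argument, and the $p$-power bookkeeping all disappear; moreover the argument applies verbatim to an arbitrary element of the intersection and to any totally symmetric dessin, not only to $p$-groups, which actually matches the generality in which the lemma is stated. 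What the paper's in-$G$ computation buys is only that it stays within the commutator-calculus toolkit it develops anyway for the later sections; for this particular lemma your abelianisation argument is the cleaner and more general one.
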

\begin{proof}
It suffices to prove that $\langle y\rangle\cap \langle x, G'\rangle\subseteq G'$. Assume that $\langle y\rangle\cap \langle x, G'\rangle=\langle y^{p^m}\rangle$ where $m$ is a nonnegative integer. Then 
\begin{align}\label{EQNN1}
y^{p^m}=x^{rp^n}g_1,
\end{align}
where $r$ and $n$ are nonnegative integers, $\gcd(r,p)=1$, and $g_1\in G'$. Applying $\tau$ to \eqref{EQNN1} we have
\begin{align}\label{EQNN2}
x^{p^m}=\tau(y^{p^m})=\tau(x^{rp^n}g_1)=\tau(x^{rp^n})\tau(g_1)=y^{rp^n}\tau(g_1).
\end{align}
So we have $y^{rp^n}=x^{p^m}\tau(g_1)^{-1}\in \langle x, G'\rangle$. By the minimality of $m$ we have $m\leq n$.

If $m=n$, then applying $\nu=\iota\tau\zeta\tau\iota: x\mapsto x, y\mapsto xy$ to \eqref{EQNN1} we have
\begin{align*}
(xy)^{p^m}=\nu(y^{p^m})=\nu(x^{rp^n}h)=x^{rp^m}\nu(g_1).
\end{align*}
By  Hall-Petrescu formula~\cite[Theorem~A.1.3]{BJ2008} $(xy)^{p^m}=x^{p^m}y^{p^m}g_2$ where $g_2\in G'$. So $x^{p^m}y^{p^m}g_2=x^{rp^m}\nu(g_1)$, and hence 
$x^{(1-r)p^m}y^{p^m}=\nu(g_1)g_2^{-1}.$ It follows that
\[
x^{p^m}=x^{(1-r)p^m}x^{rp^m}\stackrel{\eqref{EQNN1} }=x^{(1-r)p^m}y^{p^m}g_1^{-1}=\nu(g_1)g_2^{-1}g_1^{-1}\in G'.
\]
Therefore $y^{p^m}=\tau(x^{p^m})=\tau(\nu(g_1)g_2^{-1}g_1^{-1})\in G'$.

On the other hand, if $m<n$, then by \eqref{EQNN1} and \eqref{EQNN2} we have
\[
y^{p^m}\stackrel{ \eqref{EQNN1}}=x^{rp^n}g_1=(x^{p^m})^{rp^{n-m}}g_1 \stackrel{\eqref{EQNN2}}=(y^{rp^n}\tau(g_1))^{rp^{n-m}}g_1=y^{r^2p^{2n-m}}g_3,
\]
where $g_3\in G'$. So $y^{p^m(1-r^2p^{2n-2m})}=g_3\in G'.$
Since $\gcd(r,p)=1$ and $n>m$, we have $\gcd(p, 1-r^2p^{2n-2m})=1$. Consequently $y^{p^m}\in G'$.
\end{proof}

\begin{proposition}\cite{Jones2013} \label{FUNDM}
Let $G$ be a finite two-generator group, then
\begin{itemize}
\item[\rm(i)] the set $\mathcal{R}(G)$ of isomorphism classes of regular dessins $D$ with $\Aut(D)\cong G$ corresponds bijectively to the set $\mathcal{N}(G)$ of normal subgroups $N$ of $F_2$ such that $F_2/N\cong G$. 
\item[\rm(ii)] The isomorphism classes of regular dessins $D$ with $\Aut(D)\cong G$ are in one-to-one correspondence with the orbits of the action of $\Aut(G)$ on the generating pairs of $G$. 
\end{itemize}
\end{proposition}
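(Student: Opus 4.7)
The plan is to reduce both statements to the standard dictionary, already outlined in the preceding section, between dessins and transitive permutation representations of $F_2$.

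For part (i), I would argue as follows. Given a regular dessin $D$ with $\Aut(D)\cong G$, the edge stabilizer $N=\mathrm{Stab}_{F_2}(e)$ is a finite-index subgroup of $F_2$, determined by $D$ up to conjugacy; regularity of $D$ forces $N$ to be normal in $F_2$, and then $F_2/N\cong \Mon(D)\cong \Aut(D)\cong G$, so $N\in\mathcal{N}(G)$. Two regular dessins are isomorphic if and only if their dessin subgroups are conjugate in $F_2$, but for normal subgroups conjugacy coincides with equality, so the assignment $D\mapsto N$ descends to a well-defined injection $\mathcal{R}(G)\hookrightarrow\mathcal{N}(G)$. For surjectivity, any $N\in\mathcal{N}(G)$ produces a regular dessin via the transitive coset action of $F_2$ on $F_2/N$, whose dessin subgroup is exactly $N$.

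For part (ii), I would reinterpret $\mathcal{N}(G)$ in terms of surjective homomorphisms. Each $N\in\mathcal{N}(G)$ is the kernel of a surjection $\phi\colon F_2\twoheadrightarrow G$, and two such surjections have the same kernel if and only if they differ by post-composition with an element of $\Aut(G)$. Since $F_2$ is free on $X,Y$, the surjection $\phi$ is equivalent to specifying the ordered pair $(\phi(X),\phi(Y))$, which is necessarily a generating pair of $G$, and every generating pair arises in this way. Hence $\mathcal{N}(G)$ is in bijection with the $\Aut(G)$-orbits on ordered generating pairs of $G$, and combining this with part (i) yields the claim.

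I do not expect any serious obstacle: the proposition is essentially a translation between three equivalent sets of data, and the work amounts to unwinding the definitions. The only point requiring a moment of care is ensuring that $\Aut(G)$ acts on ordered generating pairs by post-composition on the associated surjection, rather than by some other rule such as simultaneous conjugation, and that the passage between dessins and their dessin subgroups indeed kills precisely the conjugation ambiguity—both of which are immediate once the setup is in place.
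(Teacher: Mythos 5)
Your argument is correct. The paper itself gives no proof of this proposition (it is quoted from Jones's paper \cite{Jones2013}), and your reduction---regular dessins $\leftrightarrow$ normal subgroups $N\leq F_2$ with $F_2/N\cong G$ (conjugacy collapsing to equality for normal subgroups), then $N\leftrightarrow$ kernels of epimorphisms $F_2\to G$, identified up to post-composition with $\Aut(G)$, i.e.\ $\Aut(G)$-orbits of generating pairs---is exactly the standard dictionary used in that reference, so there is nothing to add.
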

Note that the number $|\mathcal{R}(G)|=|\mathcal{N}(G)|$ is finite. Define
\begin{align*}
K(G)=\bigcap_{N\in \mathcal{N}(G)} N.
\end{align*}
Then $K(G)$, being the intersection of finitely many normal subgroups of finite index in $F_2$,   is normal of finite index in $F_2$. Let $U(G)$ be the regular dessin associated with $K(G)$, and $\bar G=F_2/K(G)$. Then $U(G)$ is the smallest regular dessin which covers all regular dessins in $\mathcal{R}(G)$. The construction of $U(G)$ and $\bar G$ implies that $U(G)$ is the \textit{unique} regular dessin with an automorphism group isomorphic to $\bar G$. The uniqueness implies that $U(G)$ is totally symmetric, and is defined over the field of rational numbers. In particular, if $G$ is nilpotent of class $c$, then so is $\bar G$~\cite{Jones2013}.

 \begin{example}\label{ABELIAN}\cite[Example 5.1]{Jones2013}
Let $m,n$ be positive integers, and $m\mid n$. If  $G\cong C_n\times C_m$,  the direct product of two cyclic groups of orders $n$ and $m$, then $\bar G\cong C_n\times C_n$,  and $U(G)$ is the $n$th degree Fermat dessin, corresponding to the standard embedding of $K_{n,n}$~\cite{Jones2010} into the Fermat curve
\[
x_0^n+x_1^n+x_2^n=0.
\]
 in $\mathbb{P}^2(\mathbb{C})$.
 \end{example}

For a regular dessin $D=(G,x,y)$, the type of $D$ is the triple $(l,m,n)$ where 
\[
l=o(x),\quad m=o(y),\quad n=o(xy),
\]
the orders of the elements $x,y$ and $xy$. It has genus $g$ determined by the Euler-Poincar\'e formula:
\[
2-2g=|G|(\frac{1}{l}+\frac{1}{m}+\frac{1}{n}-1).
\]

\section{Nilpotent groups}
In this section, we summarise some preliminary results on nilpotent groups to be used later.

Let $G$ be a group, the commutators of order $n$ are defined recursively by the rule
 \[
[x_1,x_2]=x_1^{-1}x_2^{-1}x_1x_2, \quad [x_1,x_2,\ldots, x_n]=[[x_1,x_2,\ldots,x_{n-1}],x_n], n>2
 \]
 where $x_i\in G$.
For subgroups $H,K$ of $G$, the notation $[H,K]$ will mean the group generated by all $[x,y]$ with $x\in H$ and $y\in K$. In particular, $[G,G]$ is the commutator subgroup of $G$. We define recursively the $n$th derived subgroup of $G$ by the rule
\[G'=G^{(1)}=[G,G],\quad G^{''}=G^{(2)}=[G',G'],\quad G^{(n)}=[G^{(n-1)},G^{(n-1)}], n\geq 3.
\]
A group is solvable if the sequence
\[
G\geq G'\geq G^{''}\geq\cdots
\]
terminates at the identity group in a finite number of steps, say $G^{(d)}=1$. The number $d$ is the derived length of $G$.  In particular, $G$ is called \textit{metabelian} if $G^{''}=1$.
 Moreover, the \textit{lower central series} of $G$ is
 \[
 G=G_1\geq G_2\geq G_3\geq\cdots
 \]
 where $G_2=[G,G]$ and $G_i=[G_{i-1},G]$ $(i\geq 3)$, and the \textit{upper central series} of $G$ is
 \[
 1=Z_0(G)\leq Z_1(G)\leq Z_2(G)\leq\cdots
 \]
 where $Z_1(G)=Z(G)$, the center of $G$, and $Z_i(G)/Z_{i-1}(G)=Z(G/Z_{i-1}(G))$ $(i\geq 2)$. A group is \textit{nilpotent} if its lower central series terminates at the identity group in a finite number of steps.
 This is equivalent to that its upper central series contains $G$ in a finite steps~\cite{Huppert1967}. If $G$ is a nilpotent, then its upper central series and lower central series have finite length and both have the same length $c$. The number $c$ is  called the class of $G$. In general if $G$ is nilpotent of class $c$, then for each $j$ $(0\leq j\leq c)$, $G_{c+1-j}\leq Z_j(G)$~\cite[Lemma 1.7.9]{XQ2010}. So if $c=3$ and $j=0$, then $G_4\leq Z_0(G)=1$. Since $G^{(i)}\leq G_{2^i}$~\cite[Theorem III.2.12]{Huppert1967}, we have $G''=1$. That is, $G$ is a metabelian group.

 \begin{lemma}\label{META2}\cite[Proposition 2.1.5]{XQ2010}
Let $G$ be a metabelian group, $x_i\in G$ $(i=1,2,\ldots,n)$. If $z\in G'$, then  \[[z,x_1,x_2,\ldots,x_n]=[z,x_{\alpha(1)},x_{\alpha(2)},\ldots, x_{\alpha(n)}].
 \]
where $\alpha$ is an arbitrary permutation of $\{1,2,\ldots, n\}$.
\end{lemma}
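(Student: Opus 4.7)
The plan is to exploit the defining feature of a metabelian group, namely that the commutator subgroup $G'$ is abelian and normal in $G$, so $G$ acts on $G'$ by conjugation through the \emph{abelian} quotient $G/G'$. Concretely, for any $g \in G'$ and $u,v \in G$, we have $g^{uv} = g^{vu}$, since $uv$ and $vu$ represent the same coset of $G'$ and the action factors through $G/G'$.

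Next I would establish the base case $n=2$ directly. For $z \in G'$ and $x,y \in G$, write $[z,x] = z^{-1}z^x$ and use the fact that $[z,x] \in G'$, so
\[
[z,x,y] = [z,x]^{-1}\,[z,x]^y = z^{-x}\,z\,z^{-y}\,z^{xy},
\]
where the product takes place in the abelian group $G'$ and can be rearranged freely. Swapping $x$ and $y$ gives $[z,y,x] = z^{-y}\,z\,z^{-x}\,z^{yx}$, and the two expressions agree because $z^{xy} = z^{yx}$ by the observation above. Thus the identity holds for adjacent transpositions at the end of the bracket.

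For the inductive step with $n \geq 3$, the cleanest approach is to consider the set $\Sigma \subseteq S_n$ of all permutations for which the identity holds and argue it is a subgroup of $S_n$. By applying the induction hypothesis to the element $z \in G'$ and the arguments $x_1, \ldots, x_{n-1}$, every permutation of $\{1,\ldots,n-1\}$ fixing $n$ lies in $\Sigma$, i.e.\ $S_{n-1} \leq \Sigma$. Setting $w = [z, x_1, \ldots, x_{n-2}] \in G'$ and applying the base case $n=2$ to $w, x_{n-1}, x_n$ shows that the adjacent transposition $(n-1\;n)$ also lies in $\Sigma$. Since $S_{n-1}$ together with $(n-1\;n)$ generate $S_n$, we conclude $\Sigma = S_n$.

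The main technical obstacle is really the base case: one must verify carefully that commutators of the form $[z, x, y]$ with $z \in G'$ can be written as a product of conjugates of $z$ in $G'$, and that two such products differing only by the order of composition of the conjugating elements coincide because of the metabelian hypothesis. Once that bookkeeping is done, the induction is essentially a subgroup-closure argument in $S_n$ and requires no further use of group-theoretic content.
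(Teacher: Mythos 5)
Your argument is correct. Note that the paper itself offers no proof of this lemma: it is quoted verbatim from \cite[Proposition 2.1.5]{XQ2010}, so there is no in-paper argument to compare with; your write-up supplies the standard proof of that cited fact. The two ingredients you use are exactly the right ones: since $G'$ is abelian it centralizes itself, so conjugation of $G$ on $G'$ factors through the abelianization $G/G'$, giving $g^{uv}=g^{vu}$ for $g\in G'$; and the expansion $[z,x,y]=z^{-x}\,z\,z^{-y}\,z^{xy}$ inside the abelian group $G'$ then settles the case $n=2$. One small point of care in the inductive step: for the closure argument on $\Sigma$ to work (i.e.\ for $\Sigma$ to be a subgroup of $S_n$, so that $\la S_{n-1},(n-1\;n)\ra=S_n$ finishes the proof), you should define $\Sigma$ as the set of permutations $\alpha$ for which the identity holds for \emph{all} choices of $z\in G'$ and $x_1,\ldots,x_n\in G$, not for one fixed tuple; composing two permutations requires applying the defining property to a relabelled tuple. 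Since the lemma is universally quantified anyway, this is a matter of phrasing rather than a gap; alternatively you can bypass the subgroup language altogether by noting that every adjacent transposition $(i\;i+1)$ lies in $\Sigma$ directly (apply your base case to $w=[z,x_1,\ldots,x_{i-1}]$ and then append $x_{i+2},\ldots,x_n$ to both sides) and inducting on the length of $\alpha$ as a word in adjacent transpositions.
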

By Lemma~\ref{META2}, for brevity, we may define
\[[ix,jy]=[x,y, x\underbrace{\ldots}_{i-1}x,y\underbrace{\ldots}_{j-1}y].\]
 where $i$ and $j$ are positive integers.

\begin{lemma}\label{META3}\cite[Proposition 2.1.6]{XQ2010}
Let $G$ be a metabelian group generated by $x$ and $y$. Then for any $s\geq2$,
\[
G_s=\la [ix,(s-i)y], G_{s+1}\mid i=1,2,\cdots,s-1\ra.
\]\end{lemma}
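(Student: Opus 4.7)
The plan is to proceed by induction on $s \geq 2$, with the main driver being that in a metabelian group the commutator map $g \mapsto [z,g]$ with fixed $z \in G'$ becomes ``bilinear'' in the second argument modulo suitably higher terms of the lower central series.

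For the base case $s=2$, I would observe that $G_2 = [G,G]$ is generated by all $[u,v]$ with $u,v$ words in $x^{\pm 1}, y^{\pm 1}$. Iterated application of the standard identities $[ab,c]=[a,c]^b[b,c]$ and $[a,bc]=[a,c][a,b]^c$ reduces every such commutator modulo $G_3=[G_2,G]$ to a product of copies of $[x,y]^{\pm 1}$, since any conjugation ambiguity $[u,v]^g[u,v]^{-1}$ lies in $G_3$ and can be absorbed. This yields $G_2=\la [x,y],G_3\ra$, matching the claim with $i=1$.

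For the inductive step, set $C_i:=[ix,(s-i)y]$ for $i=1,\dots,s-1$ and note that $G_{s+1}=[G_s,G]$. By the inductive hypothesis, every element of $G_s$ is a product of powers of the $C_i$ together with a correction in $G_{s+1}$; since $[G_{s+1},G]\leq G_{s+2}$, modulo $G_{s+2}$ one needs only the commutators $[C_i,g]$ for $g \in G=\la x,y\ra$. The identity $[z,gh]=[z,h][z,g]^h$, combined with $[z,g]\in G_{s+1}$ and $[G_{s+1},G]=G_{s+2}$, forces $[z,g]^h\equiv [z,g]\pmod{G_{s+2}}$, hence $[C_i,gh]\equiv [C_i,g][C_i,h]\pmod{G_{s+2}}$; a parallel calculation yields $[C_i^n,g]\equiv [C_i,g]^n\pmod{G_{s+2}}$. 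Consequently $G_{s+1}/G_{s+2}$ is generated by the images of $[C_i,x]$ and $[C_i,y]$ for $i=1,\dots,s-1$. By Lemma~\ref{META2} these rewrite as $[(i+1)x,(s-i)y]$ and $[ix,(s-i+1)y]$ respectively; the union of the two families is precisely $\{[jx,(s+1-j)y]:j=1,\dots,s\}$, closing the induction.

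The main obstacle will be the bookkeeping of mod-$G_{s+2}$ reductions---verifying that every error term arising when exchanging conjugates, expanding products of generators, or rewriting powers actually lies in $G_{s+2}$. This rests on the standard inclusion $[G_a,G_b]\leq G_{a+b}$ together with the metabelian hypothesis entering through Lemma~\ref{META2}, which provides the symmetry needed to reindex iterated commutators whose first argument lies in $G'$.
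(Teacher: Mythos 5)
Your argument is correct. Note that the paper does not prove this lemma at all—it is quoted from Xu--Qu \cite[Proposition 2.1.6]{XQ2010}—so there is no in-paper proof to compare against; what you give is the standard induction showing that the commutator map induces a bilinear pairing $G_s/G_{s+1}\times G/G_2\to G_{s+1}/G_{s+2}$, and all the steps check out. In particular, the key reductions are sound: corrections from $G_{s+1}$ in the first argument and conjugation ambiguities are absorbed because $[G_{s+1},G]=G_{s+2}$ and, in the metabelian case, $[G_{s+1},G_s]\leq[G',G']=1$ (so the map $z\mapsto[z,g]$ is in fact an honest homomorphism on $G'$, making your ``mod $G_{s+2}$'' bookkeeping even easier than you anticipate); the only tiny omission is the case $[C_i,g^{-1}]\equiv[C_i,g]^{-1}\pmod{G_{s+2}}$, which follows at once from the multiplicativity you established. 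The metabelian hypothesis enters exactly where you say it does, through Lemma~\ref{META2}, which converts $[C_i,x]$ and $[C_i,y]$ into $[(i+1)x,(s-i)y]$ and $[ix,(s-i+1)y]$ and closes the induction.
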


\begin{lemma}\label{META4}\cite[Proposition 2.1.7]{XQ2010}
Let $G$ be a metabelian group, $x,y\in G$. Then for any positive integers $m$ and $n$
\begin{align}
[x^m,y^n]=&\prod_{i=1}^m\prod_{j=1}^n[ix,jy]^{{m\choose i}{n\choose j}}.
\end{align}

\end{lemma}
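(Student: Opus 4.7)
The key observation is that the metabelianity of $G$ turns $G'$ into an abelian group on which $G/G'$ acts by conjugation, and that $[x^m,y^n]\in G'$ can be computed entirely inside this $\Z[G/G']$-module. Writing $G'$ additively, I set $\sigma_x,\sigma_y$ for the actions of $x,y$ and $u:=\sigma_x-1$, $v:=\sigma_y-1$, so that $u(c)=[c,x]$ and $v(c)=[c,y]$ for every $c\in G'$. Because $G/G'$ is abelian, the operators $u$ and $v$ commute, and Lemma~\ref{META2} translates into the identity $u^{i-1}v^{j-1}c=[ix,jy]$ with $c:=[x,y]$.

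The plan for the main computation is to iterate a single metabelian power rule. First a short induction on $m$, using $cx=xc^{x}$ together with the fact that the conjugates $c^{x^{k}}$ lie in the abelian group $G'$, gives $(xc)^m=x^m\prod_{k=0}^{m-1}c^{x^{k}}$, from which
\[
w\;:=\;[x^m,y]\;=\;\prod_{k=0}^{m-1}c^{x^{k}}.
\]
Applying the same power rule with $y$ and $w$ now playing the roles of $x$ and $c$, and using $w\in G'$, one obtains $(x^m)^{y^n}=x^m\prod_{j=0}^{n-1}w^{y^{j}}$, whence
\[
[x^m,y^n]\;=\;\prod_{j=0}^{n-1}w^{y^{j}}\;=\;\sum_{j=0}^{n-1}\sum_{k=0}^{m-1}\sigma_x^{k}\sigma_y^{j}\,c
\]
after passing to the additive module notation.

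The remaining step is pure linear algebra. The telescoping identity $(1+u)^m-1=u\sum_{k=0}^{m-1}(1+u)^k$ combined with the binomial theorem gives $\sum_{k=0}^{m-1}\sigma_x^{k}=\sum_{i=1}^m\binom{m}{i}u^{i-1}$, and likewise for $v$; substituting back yields
\[
[x^m,y^n]\;=\;\sum_{i=1}^m\sum_{j=1}^n\binom{m}{i}\binom{n}{j}\,u^{i-1}v^{j-1}c,
\]
which, once translated via $u^{i-1}v^{j-1}c=[ix,jy]$ back to multiplicative notation, is exactly the stated formula. I expect the main hurdle to be the metabelian power rule $(xc)^m=x^m\prod_{k=0}^{m-1}c^{x^{k}}$; once it is in hand, everything else is formal manipulation in the abelian group $G'$, and the binomial coefficients emerge directly from expanding the operators $(1+u)^m$ and $(1+v)^n$.
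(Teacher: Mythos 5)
Your argument is correct in substance, and it is worth noting that the paper offers no proof to compare it with: Lemma~\ref{META4} is simply quoted from \cite[Proposition 2.1.7]{XQ2010}, so your module-theoretic derivation is a self-contained alternative to the citation. The main steps all check out: because $G'$ is abelian, conjugation makes $G'$ a module on which $\sigma_x$ and $\sigma_y$ commute (conjugation by $[x,y]\in G'$ is trivial on $G'$), the identification $u^{i-1}v^{j-1}[x,y]=[ix,jy]$ is exactly the content of Lemma~\ref{META2}, and the two inductions giving $(xc)^m=x^m\prod_{k=0}^{m-1}c^{x^k}$ and $(x^m)^{y^n}=x^m\prod_{j=0}^{n-1}w^{y^j}$ are routine (they should be written out, since the second is an iteration of conjugation rather than literally the same power rule). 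The one spot to tighten is the passage from $(1+u)^m-1=u\sum_{k=0}^{m-1}(1+u)^k$ to $\sum_{k=0}^{m-1}\sigma_x^k=\sum_{i=1}^m\binom{m}{i}u^{i-1}$: in the image of $\mathbb{Z}[G/G']$ in $\mathrm{End}(G')$ the element $u$ is typically a zero divisor, so you may not cancel it there. The remedy is to prove $\sum_{k=0}^{m-1}(1+t)^k=\sum_{i=1}^{m}\binom{m}{i}t^{i-1}$ as an identity in the polynomial ring $\mathbb{Z}[t]$, where $t$ is a non-zero-divisor (or verify it by comparing coefficients, using $\sum_{k=i}^{m-1}\binom{k}{i}=\binom{m}{i+1}$), and only then specialize $t\mapsto u$ and $t\mapsto v$. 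With that adjustment the computation delivers exactly the stated formula with exponents $\binom{m}{i}\binom{n}{j}$.
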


 \begin{lemma}\label{META5}\cite[Proposition 2.1.8]{XQ2010}
Let $G$ be a metabelian group, $x,y\in G$. Then for any integer $m\geq2$
\begin{align}
(xy^{-1})^m=&x^m\Big(\prod_{i+j\leq m}[ix,jy]^{m\choose{i+j}}\Big)y^{-m}.
\end{align}
\end{lemma}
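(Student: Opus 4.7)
The plan is to induct on $m$, working throughout inside the abelian subgroup $G'$ (which is abelian because, as noted just before Lemma~\ref{META2}, $G''=1$ in a metabelian group). For the base case $m=2$, the only index pair with $i,j\ge 1$ and $i+j\le 2$ is $(1,1)$, and ${2\choose 2}=1$, so the right-hand side reduces to $x^2[x,y]y^{-2}$. The identity $y^{-1}x=x[x,y]y^{-1}$, a direct consequence of the convention $[x,y]=x^{-1}y^{-1}xy$, inserted into $(xy^{-1})^2=x(y^{-1}x)y^{-1}$, yields this expression.

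For the step $m\to m+1$ I would write $(xy^{-1})^{m+1}=(xy^{-1})^m\cdot xy^{-1}$, substitute the inductive hypothesis, and then push the new leading $x$ leftward past the tail $y^{-m}$. The identity $y^{-m}x=x[x,y^m]y^{-m}$ produces a new commutator $[x,y^m]$, which Lemma~\ref{META4} (applied with a single $x$-entry on the left) expands as $\prod_{j=1}^{m}[x,jy]^{m\choose j}$. The existing commutator block $P=\prod_{i+j\le m}[ix,jy]^{m\choose i+j}$ must then be conjugated by the new leading $x$; since $P\in G'$ and $G'$ is abelian, conjugation by $x$ distributes as the multiplicative extension of $z\mapsto z[z,x]$, and by Lemma~\ref{META2} each nested bracket $[[ix,jy],x]$ collapses to the basic commutator $[(i+1)x,jy]$. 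Because all factors now lie in the abelian $G'$, they may be reordered freely and collected between $x^{m+1}$ and $y^{-(m+1)}$.

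Reading off the exponent of a basic commutator $[ix,jy]$ in the resulting product, two sources contribute: the original block $P$ gives ${m\choose i+j}$, and either the conjugation shift (for $i\ge 2$, contributing ${m\choose i+j-1}$ via the index shift $(i-1)\mapsto i$) or the expansion of $[x,y^m]$ (for $i=1$, contributing ${m\choose j}={m\choose i+j-1}$) supplies the other. Pascal's identity ${m\choose k}+{m\choose k-1}={m+1\choose k}$ then delivers ${m+1\choose i+j}$ in every interior case, while the extreme diagonal $i+j=m+1$ is covered by the single term ${m\choose m}=1={m+1\choose m+1}$. The principal obstacle is precisely this bookkeeping: tracking which commutator each of the three manipulations (push past $y^{-m}$, conjugate by $x$, expand $[x,y^m]$) produces, and verifying that the Pascal combinations come out correctly at both interior and boundary indices. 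What makes the scheme tractable is the abelianness of $G'$, which allows unrestricted reordering of commutator factors, together with Lemma~\ref{META2}, which identifies every deeply nested commutator of $x$'s and $y$'s with a unique basic $[ix,jy]$ irrespective of nesting order.
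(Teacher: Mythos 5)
Your induction is correct, and it is worth noting that the paper does not prove this statement at all: Lemma~\ref{META5} is simply quoted from \cite[Proposition~2.1.8]{XQ2010}, so your argument is a self-contained substitute for the citation rather than a variant of a proof given in the text. The structure is the standard collection-process proof one would expect to find in the reference: the base case $m=2$ follows from $y^{-1}x=x[x,y]y^{-1}$; in the step $m\to m+1$ the three sources of commutators you identify do exactly what you claim, since $y^{-m}x=x[x,y^m]y^{-m}$, $Px=xP^{x}$ with $z^{x}=z[z,x]$ and $[z^{e},x]=[z,x]^{e}$ for $z\in G'$ (both using that $G'$ is abelian), Lemma~\ref{META2} collapses $[[ix,jy],x]$ to $[(i+1)x,jy]$, and Lemma~\ref{META4} with a single $x$-entry gives $[x,y^m]=\prod_{j=1}^{m}[x,jy]^{\binom{m}{j}}$. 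The exponent count then checks out: for $i+j\le m$ the two contributions $\binom{m}{i+j}+\binom{m}{i+j-1}$ combine by Pascal's rule to $\binom{m+1}{i+j}$ (the second coming from the conjugation shift when $i\ge2$ and from the expansion of $[x,y^m]$ when $i=1$), while on the boundary $i+j=m+1$ exactly one term $\binom{m}{m}=1=\binom{m+1}{m+1}$ appears. The only implicit conventions — that the product ranges over $i,j\ge1$ (consistent with the paper's definition of $[ix,jy]$) and that the unordered product is meaningful because all factors lie in the abelian group $G'$ — are correctly assumed, so there is no gap; what your write-up buys over the paper's citation is a verifiable derivation from the two lemmas already stated (\ref{META2} and \ref{META4}), at the cost of the bookkeeping you describe.
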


 \begin{corollary}\label{META6}
Let $G$ be a nilpotent group of class 3, $x,y\in G$. Then for any nonnegative integers $m$ and $n$,
\begin{align*}
&[x^m,y^n]=[x,y]^{mn}[x,y,x]^{n{m\choose 2}}[x,y,y]^{m{n\choose 2}}, \\
&(xy^{-1})^m=x^my^{-m}[x,y]^{m\choose2}[x,y,x]^{m\choose3}[x,y,y]^{{m\choose3}-m{m\choose2}},\\
&(xy)^m=x^my^m[x,y]^{-{m\choose2}}[x,y,x]^{-{m\choose3}}[x,y,y]^{{m\choose3}+(1-m){m\choose2}},
\end{align*}
where ${m\choose j}=0$ if $m<j$.
\end{corollary}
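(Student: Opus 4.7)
All three identities are specializations of the metabelian formulas in Lemmas~\ref{META4} and~\ref{META5}. The point is that in a class-$3$ nilpotent group $G''=1$ and $G_4=1$, so any commutator $[ix,jy]$ of weight $i+j\geq 4$ is trivial, and elements of $G_3$ are central in $G'$ and commute with every conjugation (since $[G_3,G]=G_4=1$).

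For the first formula I would simply quote Lemma~\ref{META4}: the double product
$\prod_{i=1}^{m}\prod_{j=1}^{n}[ix,jy]^{\binom{m}{i}\binom{n}{j}}$
collapses to the three terms with $(i,j)\in\{(1,1),(2,1),(1,2)\}$, which are exactly $[x,y]^{mn}$, $[x,y,x]^{n\binom{m}{2}}$, and $[x,y,y]^{m\binom{n}{2}}$.

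For the second formula I would start from Lemma~\ref{META5}, keep only the terms with $i+j\in\{2,3\}$, obtaining
\[
(xy^{-1})^m=x^m\Bigl([x,y]^{\binom{m}{2}}[x,y,x]^{\binom{m}{3}}[x,y,y]^{\binom{m}{3}}\Bigr)y^{-m},
\]
and then push $y^{-m}$ past the commutator block. The elements $[x,y,x]$ and $[x,y,y]$ of $G_3$ commute with $y^{-m}$ because $[G_3,G]=1$. For $[x,y]\in G_2$ the auxiliary identity I would establish is
\[
[z,y^{-1}]=[z,y]^{-1}\qquad(z\in G'),
\]
which follows immediately from $[a,b^{-1}]=([a,b]^{-1})^{b^{-1}}$ together with $[G_3,G]=1$; iterating gives $[z,y^{-m}]=[z,y]^{-m}$ (alternatively, apply Lemma~\ref{META4} with $x$ replaced by $z\in G'$ and note all higher weight corrections lie in $G_4=1$). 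Applying this to $z=[x,y]^{\binom{m}{2}}$ produces a correction factor $[x,y,y]^{-m\binom{m}{2}}$ that combines with the existing $[x,y,y]^{\binom{m}{3}}$ to give the exponent ${\binom{m}{3}}-m\binom{m}{2}$.

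For the third formula I would substitute $y\mapsto y^{-1}$ in the (already proved) second formula. This turns $(xy^{-1})^m$ into $(xy)^m$ on the left and requires rewriting $[x,y^{-1}]$, $[x,y^{-1},x]$ and $[x,y^{-1},y^{-1}]$ in the basis $\{[x,y],[x,y,x],[x,y,y]\}$. Using the identities above I obtain $[x,y^{-1}]=[x,y]^{-1}[x,y,y]$, $[x,y^{-1},x]=[x,y,x]^{-1}$, and $[x,y^{-1},y^{-1}]=[x,y,y]$, all of which come from $[G_3,G]=1$ together with the class-$3$ version of $[a^{-1},b]=[a,b]^{-1}$ for $a\in G'$. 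Raising the first of these to the power $\binom{m}{2}$ (legal because the factors commute, again by $[G_2,G_3]=G_5=1$) and collecting exponents produces the stated identity. The only obstacle is careful bookkeeping; the arithmetic is routine once the three auxiliary commutator identities are in hand and one remembers that $G'$ is abelian and $G_3$ is central in $G'$.
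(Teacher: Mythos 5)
Your proposal is correct and takes essentially the same route as the paper, whose proof consists of the single remark that the identities follow from Lemmas~\ref{META4} and~\ref{META5}; you have merely supplied the routine details (truncating the products at weight $3$, moving $y^{-m}$ across the commutator block using $[G_3,G]=1$, and substituting $y\mapsto y^{-1}$), all of which check out.
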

\begin{proof}These identities can be easily derived from Lemma~\ref{META4} and \ref{META5}.
\end{proof}
\begin{corollary}\label{META7}
Let $G$ be a nilpotent group of class 3, $x,y\in G$. Then
for any nonnegative integers $i,j,k,l$ and $m$,
\begin{align*}
&(x^iy^j)^m=x^{mi}y^{mi}[x,y]^{-ij{m\choose2}}[x,y,x]^{-j{i\choose2}{m\choose2}-i^2j{m\choose3}}[x,y,y]^{-i{j\choose2}{m\choose2}+ij^2\big({m\choose3}+(1-m){m\choose2}\big)},\\
&[x^iy^j,x^ky^l]=[x,y]^{il-jk}[x,y,x]^{l{i\choose2}-j{k\choose2}}[x,y,y]^{ijl-kjl+i{l\choose2}-k{j\choose2}}.
\end{align*}
\end{corollary}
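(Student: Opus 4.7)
The plan is to derive both identities from Corollary~\ref{META6} by substituting $u=x^i, v=y^j$ (and similarly for the commutator) and then exploiting the fact that in a nilpotent group of class $3$ the subgroup $G_3$ is central, which turns the commutator map restricted to $G_2\times G$ into a bilinear pairing. Two facts I will use throughout without further comment: (a) for $z\in G_2$ and $a\in G$ we have $[z,a]\in G_3\subseteq Z(G)$, hence $z^a=z\,[z,a]$ and $[z,a^n]=[z,a]^n=[z^n,a]$; (b) since $G$ is metabelian (as noted in the introductory paragraph of Section~3), $G_2$ is abelian, so the three ``elementary'' commutators $[x,y],[x,y,x],[x,y,y]$ pairwise commute.

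For the first identity, apply the third formula of Corollary~\ref{META6} with $x,y$ replaced by $u=x^i, v=y^j$:
\[
(x^iy^j)^m=(x^i)^m(y^j)^m\,[u,v]^{-\binom{m}{2}}\,[u,v,u]^{-\binom{m}{3}}\,[u,v,v]^{\binom{m}{3}+(1-m)\binom{m}{2}}.
\]
Then expand $[x^i,y^j]$ by the first formula of Corollary~\ref{META6}, and compute
\[
[x^i,y^j,x^i]=[[x,y]^{ij}\cdot g_3,\,x^i]=[x,y,x]^{i^2j},\qquad [x^i,y^j,y^j]=[x,y,y]^{ij^2},
\]
where $g_3\in G_3$ is central and therefore drops out, and the right-hand side uses fact~(a). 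Collecting the exponents of $[x,y],[x,y,x],[x,y,y]$ (which commute by~(b)) gives the claimed formula.

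For the second identity, I will expand $[x^iy^j,x^ky^l]$ using the standard identities $[ab,c]=[a,c]^b[b,c]$ and $[a,bc]=[a,c]\,[a,b]^c$, together with $[x^i,x^k]=[y^j,y^l]=1$. A direct calculation yields
\[
[x^iy^j,x^ky^l]=[x^i,y^l]\cdot\bigl[[x^i,y^l],y^j\bigr]\cdot[y^j,x^k]\cdot\bigl[[y^j,x^k],y^l\bigr],
\]
where conjugation of the $G_2$-elements $[x^i,y^l]$ and $[y^j,x^k]$ by $y^j$ and $y^l$ has been rewritten using $z^g=z[z,g]$. Each of the four factors can now be read off from Corollary~\ref{META6}: $[x^i,y^l]$ and $[y^j,x^k]=[x,y]^{-jk}[x,y,y]^{-k\binom{j}{2}}[x,y,x]^{-j\binom{k}{2}}$ give the full three-term expansion, while the two iterated commutators collapse to $[x,y,y]^{ijl}$ and $[x,y,y]^{-jkl}$ respectively, by fact~(a). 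Combining the exponents yields exactly the asserted expression.

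The main obstacle will be the bookkeeping of signs and of the central ``correction'' terms that appear whenever one passes a $G_2$-element through a conjugation; in particular one must verify that the apparent ordering ambiguities in products like $[x^i,y^j,x^i]=[[x,y]^{ij}g_3,x^i]$ really do collapse, and that the two cross terms $ijl$ and $-jkl$ in the exponent of $[x,y,y]$ (which come from $[[x^i,y^l],y^j]$ and $[[y^j,x^k],y^l]$) combine correctly with $i\binom{l}{2}-k\binom{j}{2}$. Once these verifications are done the two formulas are just a matter of collecting exponents, so no separate argument is needed.
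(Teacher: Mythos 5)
Your proposal is correct and follows essentially the same route as the paper: the first identity is obtained by substituting $x^i,y^j$ into the third formula of Corollary~\ref{META6}, expanding $[x^i,y^j]$ by the first formula and using centrality of $G_3$ to collapse the iterated commutators, and the second identity is obtained by the same commutator expansion $[x^iy^j,x^ky^l]=[x^i,y^l]^{y^j}[y^j,x^k]^{y^l}$ (your four-factor form is just this with $z^g=z[z,g]$) followed by Corollary~\ref{META6}. The exponent bookkeeping you outline matches the paper's computation exactly (note only that the correct factor is $y^{mj}$, a typo already present in the paper's statement).
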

\begin{proof} By Corollary~\ref{META6}, we have
\begin{align*}
(x^iy^j)^m=&x^{mi}y^{mi}[x^i,y^j]^{-{m\choose 2}}[x^i,y^j,x^i]^{-{m\choose3}}[x^i,y^j,y^j]^{{m\choose3}+(1-m){m\choose2}}\\
=&x^{mi}y^{mi}\big([x,y]^{ij}[x,y,x]^{j{i\choose2}}[x,y,y]^{i{j\choose2}}\big)^{-{m\choose 2}}\\
&\big[[x,y]^{ij}[x,y,x]^{j{i\choose2}}[x,y,y]^{i{j\choose2}},x^i\big]^{-{m\choose3}}\\
&\big[[x,y]^{ij}[x,y,x]^{j{i\choose2}}[x,y,y]^{i{j\choose2}},y^j\big]^{{m\choose3}+(1-m){m\choose2}}\\
=&x^{mi}y^{mi}[x,y]^{-ij{m\choose2}}[x,y,x]^{-j{i\choose2}{m\choose2}}[x,y,y]^{-i{j\choose2}{m\choose2}}\\
&[x,y,x]^{-i^2j{m\choose3}}[x,y,y]^{ij^2\big({m\choose3}+(1-m){m\choose2}\big)}\\
=&x^{mi}y^{mi}[x,y]^{-ij{m\choose2}}[x,y,x]^{-j{i\choose2}{m\choose2}-i^2j{m\choose3}}\\
&[x,y,y]^{-i{j\choose2}{m\choose2}+ij^2\big({m\choose3}+(1-m){m\choose2}\big)}.
\end{align*}

\begin{align*}
[x^iy^j,x^ky^l]=&[x^i,x^ky^l]^{y^j}[y^j,x^ky^l]\\
=&[x^i,y^l]^{y^j}[y^j,x^k]^{y^l}\\
=&\big([x,y]^{il}[x,y,x]^{l{i\choose2}}[x,y,y]^{i{l\choose2}}\big)^{y^j}\big([y,x]^{jk}[y,x,y]^{k{j\choose2}}[y,x,x]^{j{k\choose2}}\big)^{y^l}\\
=&([x,y]^{y^j})^{il}([y,x]^{y^l})^{jk}[x,y,x]^{l{i\choose2}-j{k\choose2}}[x,y,y]^{i{l\choose2}-k{j\choose2}}\\
=&([x,y][x,y,y^j])^{il}([y,x][y,x,y^l])^{jk}[x,y,x]^{l{i\choose2}-j{k\choose2}}[x,y,y]^{i{l\choose2}-k{j\choose2}}\\
=&[x,y]^{il-jk}[x,y,x]^{l{i\choose2}-j{k\choose2}}[x,y,y]^{ijl-kjl+i{l\choose2}-k{j\choose2}}.
\end{align*}
\end{proof}

 \begin{lemma}\cite[Chapter III, Theorem 3.15]{Huppert1967}\label{BURNSIDE}
Let $G$ be a $p$-group and $\Phi(G)$ the Frattini subgroup of $G$. If $|G/\Phi(G)|=p^d,$ then every minimal generating set of $G$ contains exactly $d$ elements. In particular, $G=\la x_i\mid i=1,2,\ldots, d\ra$ if and only if $G/\Phi(G)=\la x_i\Phi(G)\mid i=1,2,\ldots, d\ra$.
\end{lemma}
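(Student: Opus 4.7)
The plan is to reduce the statement to linear algebra over $\mathbb{F}_p$ via the Frattini quotient. The first ingredient I would establish is the identity $\Phi(G) = G'G^p$ for a $p$-group $G$. This uses two standard facts: every maximal subgroup $M$ of a $p$-group has index $p$ (because $p$-groups have nontrivial center, an inductive argument shows maximals are normal of index $p$), so $M \supseteq G'G^p$; and conversely $G/G'G^p$ is elementary abelian, whose maximal subgroups correspond via the correspondence theorem to hyperplanes, forcing $\bigcap M \subseteq G'G^p$. Consequently $V := G/\Phi(G)$ is a vector space over $\mathbb{F}_p$, and the hypothesis $|G/\Phi(G)| = p^d$ says $\dim_{\mathbb{F}_p} V = d$.

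Next I would prove the \emph{non-generator property}: if $S \subseteq G$ and $\langle S, \Phi(G)\rangle = G$, then $\langle S\rangle = G$. Suppose not; then $\langle S\rangle$ is contained in some maximal subgroup $M$. Since $\Phi(G) \subseteq M$ by definition as the intersection of all maximals, we would have $\langle S, \Phi(G)\rangle \subseteq M \ne G$, contradicting the hypothesis. This is the key technical step and, in my view, the main conceptual obstacle — everything else is bookkeeping in the vector space $V$.

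Combining these, the second assertion of the lemma follows: if $G = \langle x_1,\ldots,x_d\rangle$, passing to $V$ gives $V = \langle \bar x_1,\ldots,\bar x_d\rangle$; and if the $\bar x_i$ span $V$, then $\langle x_1,\ldots,x_d, \Phi(G)\rangle = G$, whence $\langle x_1,\ldots,x_d\rangle = G$ by the non-generator property. For the first assertion, note that minimal generating sets of $G$ correspond under the quotient $G \twoheadrightarrow V$ to minimal spanning sets of $V$ (any proper sub-collection would generate a proper subspace, hence by the equivalence just established a proper subgroup). Since $V$ is a $d$-dimensional $\mathbb{F}_p$-vector space, every minimal spanning set is a basis and therefore has exactly $d$ elements, completing the proof.
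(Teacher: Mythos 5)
Your argument is correct: it is the standard proof of Burnside's Basis Theorem via $\Phi(G)=G'G^p$, the non-generator property of the Frattini subgroup, and linear algebra in the elementary abelian quotient $G/\Phi(G)$. The paper offers no proof of its own but simply cites Huppert, and the cited proof proceeds along exactly these lines, so there is nothing to add beyond noting that finiteness of $G$ (guaranteeing every proper subgroup lies in a maximal one) is what makes the non-generator step work.
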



\section{Nilpotent totally symmetric dessins of class three}
The rest of the paper is devoted to the classification of totally symmetric dessins $D=(G,x,y)$ with a nilpotent automorphism group $G$ of class three. Since each nilpotent group is a direct product of its Sylow subgroups, it suffices to consider the case when  $G$ is a $p$-group of class three. For simplicity such a dessin will be called a \textit{totally symmetric $p$-dessin  of class three}. In this section we give conventions and prove several lemmas. 

Let $D=(G,x,y)$ be a totally symmetric $p$-dessin of class three.
Define
  \[z=[x,y],\quad u=[z,x]=[x,y,x]\quad \text{and}\quad v=[z,y]=[x,y,y]\]
Since $D$ is totally symmetric, by Eq.~\eqref{MinG} each of the following assignments extends to an automorphism of $G\cong F_2/N$:
\begin{align*}
\tau:& x\mapsto y, \quad y\mapsto x;   \\
 \pi:& x\mapsto x^{-1},\quad y\mapsto y;\\
  \zeta:&x\mapsto xy,\quad y\mapsto y;\\
\iota:& x\mapsto x^{-1}, \quad y\mapsto y^{-1};\\
\xi:&x\mapsto xy^{-1}, y\mapsto y.\\
\end{align*}
where $\iota=\pi\tau\pi\tau$ and $\xi=\zeta^{-1}.$
\begin{lemma}\label{TRANS} Let $(G,x,y)$ be a totally symmetric $p$-dessin of class three, and denote $z=[x,y]$, $u=[z,x]$ and $v=[z,y].$ Then under the automorphisms $\tau$,  $\pi$, $\zeta$, $\iota$ and $\xi$ of $G$ the images of $z$, $u$ and $v$ are summarised in the following table.
\[
\begin{array}{|c|lllll|}
\hline
& x      & y     & z           & u              &v\\
\hline
\tau       & y      & x     &z^{-1}       & v^{-1}        &u^{-1}\\
\pi     & x^{-1} & y     &z^{-1}u      & u             &v^{-1}\\
\zeta       & xy     & y     &zv           & uv            &v     \\
\iota      &x^{-1}  & y^{-1}&zu^{-1}v^{-1}&u^{-1}         &v^{-1}\\
\xi      &xy^{-1} & y     &zv^{-1}      &uv^{-1}        &v\\
\hline
\end{array}
\]
\end{lemma}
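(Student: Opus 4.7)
The strategy is direct computation, leveraging two structural facts that are immediate for a class-three $p$-group: (i) $G$ is metabelian (as the paper already notes, $G''\le G_{2^2}\cap G$ vanishes when $G_4=1$), and (ii) $G_3=\langle u,v\rangle$ lies in $Z(G)$, because $[G_3,G]=G_4=1$. Consequently, $u$ and $v$ commute with every element of $G$, and for any $w\in G'$ and $g\in G$, the conjugate $w^g=w\cdot[w,g]$ differs from $w$ by a central element of $G_3$. These two facts will collapse most of the computations.

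The column for $\sigma(z)$ is computed by evaluating $[\sigma(x),\sigma(y)]$ via the standard identities $[ab,c]=[a,c]^b[b,c]$, $[a,bc]=[a,c][a,b]^c$, $[a^{-1},b]=[a,b]^{-a^{-1}}$, $[a,b^{-1}]=[a,b]^{-b^{-1}}$. For example, $\tau(z)=[y,x]=z^{-1}$ is trivial; $\zeta(z)=[xy,y]=[x,y]^y=z\cdot[z,y]=zv$; $\xi(z)=[xy^{-1},y]=z^{y^{-1}}=zv^{-1}$. For $\pi(z)=[x^{-1},y]$ I expand $1=[x\cdot x^{-1},y]=[x,y]^{x^{-1}}[x^{-1},y]$, giving $[x^{-1},y]=z^{-x^{-1}}=(zu^{-1})^{-1}=z^{-1}u$ after using that $u$ is central. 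The case $\iota(z)=[x^{-1},y^{-1}]$ is the longest: two successive expansions of the above kind, together with centrality of $u,v$, yield $\iota(z)=zu^{-1}v^{-1}$.

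The columns for $\sigma(u)=[\sigma(z),\sigma(x)]$ and $\sigma(v)=[\sigma(z),\sigma(y)]$ now become essentially free. Since $\sigma(z)$ differs from $z^{\pm 1}$ only by a central factor in $\langle u,v\rangle$, one has $[\sigma(z),\sigma(x)]=[z^{\pm 1},\sigma(x)]$, and similarly for $\sigma(y)$. Another layer of simplification comes from the fact that $u,v\in Z(G)$ are fixed under conjugation, so $u^y=u$, $v^x=v$, etc. Thus, for instance, $\zeta(u)=[zv,xy]=[z,xy]=[z,y]\,[z,x]^y=v\cdot u=uv$; $\iota(u)=[zu^{-1}v^{-1},x^{-1}]=[z,x^{-1}]=u^{-1}$; and so on across the table.

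There is no conceptual obstacle; the only risk is bookkeeping errors in tracking central correction terms and the signs arising from inversions. A clean way to present the proof is to dispose of the auxiliary identities $[z,x^{-1}]=u^{-1}$, $[z,y^{-1}]=v^{-1}$, $[z^{-1},x]=u^{-1}$, $[z^{-1},y]=v^{-1}$, $z^{x^{\pm 1}}=zu^{\pm 1}$, $z^{y^{\pm 1}}=zv^{\pm 1}$ at the outset (each follows from one application of $[ab,c]=1$ and centrality of $u,v$), and then read off all fifteen entries mechanically. This avoids any redundant expansion and keeps the argument to roughly half a page.
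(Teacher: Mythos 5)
Your proposal is correct and follows essentially the same route as the paper: apply each automorphism to the defining commutators $z=[x,y]$, $u=[z,x]$, $v=[z,y]$ and simplify with standard commutator identities, using that $G_3=\langle u,v\rangle$ is central since $G_4=1$. The paper only writes out the $\tau$ column and leaves the rest ``to the reader''; your explicit list of auxiliary identities (e.g.\ $[z,x^{-1}]=u^{-1}$, $z^{y^{\pm1}}=zv^{\pm1}$) simply makes that omitted bookkeeping precise, and all fifteen entries you derive agree with the table.
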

\begin{proof}
Since $x^{\tau}=y$ and $y^{\tau}=x$, we have
\begin{align*}
&z^{\tau}=[x,y]^{\tau}=[x^{\tau},y^{\tau}]=[y,x]=z^{-1},\\
&u^{\tau}=[z,x]^{\tau}=[z^{\tau},x^{\tau}]=[z^{-1},y]=[z,y]^{-1}=v^{-1},\\
&v^{\tau}=[z,y]^{\tau}=[z^{\tau},y^{\tau}]=[z^{-1},x]=[z,x]^{-1}=u^{-1}.
\end{align*}
The proof for other cases is similar and is left to the reader.
\end{proof}

Now assume that
  \[
  o(x)=p^a,\quad o(z)=p^b\quad\text{and}\quad o(u)=p^c,
  \]
where $a,b,c$ are nonnegative integers.
 By Lemma~\ref{META3}, $G_3=\la u, v\ra$ and $G_2=G'=\la z, G_3\ra$. Define $H=\la G',x\ra$. Then $G=\la H, y\ra$ and $H\normal G$. Assume that 
\[
\la x\ra\cap G'=\la x^{p^d}\ra\quad \text{and}\quad \la y\ra\cap H=\la y^{p^e}\ra
\]
 where $d,e$ are nonnegative integers. In other words, $d$ and $e$ are the smallest nonnegative integers $i$ and $j$ such that $x^{p^i}\in G'$ and $y^{p^j}\in H$. Note that 
\[
\langle y\rangle\cap G'=\tau(\langle x\rangle\cap G')=\tau(\langle x^{p^d}\rangle)=\langle \tau( x^{p^d})\rangle=\langle y^{p^d}\rangle
\]
By Lemma~\ref{INTERSECTION} $\langle y\rangle\cap H=\langle y\rangle\cap G'$,  we have $d=e$. Assume that 
\begin{align}
&x^{p^d}=z^lu^mv^n, \label{IDEN1}
\end{align}
where $l,m,n$ are nonnegative integers such that $ 0\leq l< p^b$ and $0\leq m,n<p^c$.
Then 
\begin{align}\label{IDEN2}
y^{p^d}=\tau(x^{p^d})=\tau(z^lu^mv^n)=z^{-l}v^{-m}u^{-n}.
\end{align} 

\begin{lemma}\label{DISJOINT} Let $(G,x,y)$ be a totally symmetric $p$-dessin of class three. Then with the above notation the following statements hold true:
\begin{enumerate}
\item[\rm(i)]$o(y)=o(x)$ and $o(u)=o(v)$;
\item[\rm(ii)]$\la u\ra\cap\la v\ra=\la z\ra\cap G_3=\la x\ra\cap\la v\ra=\la y\ra\cap\la u\ra=1;$
\item[\rm(iii)] $1\leq c\leq b\leq d\leq a;$
\item[\rm(iv)] $n=0$.
\end{enumerate}
\end{lemma}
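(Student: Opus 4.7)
My plan is to prove the four assertions in the order (i), the first two intersection identities of (ii), (iii), (iv), and finally the last two identities of (ii). The ordering is forced because (iv) relies on the first half of (ii), while the last two identities of (ii) in turn rely on (iv).

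Part (i) is immediate: by Lemma~\ref{TRANS} the automorphism $\tau$ sends $x$ to $y$ and $u$ to $v^{-1}$, and automorphisms preserve orders. To establish $\la u\ra\cap\la v\ra=1$, note that $G_3=\la u,v\ra\subseteq Z(G)$ since $G_4=[G_3,G]=1$, so $u$ and $v$ commute. If $u^s=v^t$, then $\zeta(u^s)=(uv)^s=u^s v^s$ while $\zeta(v^t)=v^t$, forcing $v^s=1$; by (i) then $u^s=1$. For $\la z\ra\cap G_3=1$, write $z^s=u^a v^b$ with $0\le a,b<p^c$. Applying $\tau$, $\pi$ and $\xi$ one obtains, respectively, $z^s=u^b v^a$, $z^s=u^{s-a}v^a$ and $z^s=u^a v^s$. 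The first, combined with $\la u\ra\cap\la v\ra=1$, forces $a=b$; the other two then yield $p^c\mid s-2a$ and $p^c\mid s-a$, so $p^c\mid a$, whence $u^a=v^a=1$ and $z^s=1$.

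For (iii), $c\ge 1$ holds because class three means $G_3\ne 1$, hence $u\ne 1$ by (i). Since $u\in Z(G)$, an easy commutator computation gives $[z^k,x]=u^k$ for all $k$, so $u^{p^b}=[z^{p^b},x]=1$ and $c\le b$. Next, Corollary~\ref{META6} gives $[x^{p^d},y]=z^{p^d}u^{{p^d\choose 2}}$, which lies in $[G',G]=G_3$, so $z^{p^d}\in\la z\ra\cap G_3=1$, giving $b\le d$; and $d\le a$ is immediate from $x^{p^a}=1\in G'$. For (iv), apply $\iota$ to \eqref{IDEN1}: using abelianness of $G'$ and centrality of $u,v$, $\iota(z^l u^m v^n)=z^l u^{-l-m}v^{-l-n}$, which must equal $x^{-p^d}=z^{-l}u^{-m}v^{-n}$; comparing gives $z^{-2l}u^l v^l=1$, and the intersection identities already proved force $p^c\mid l$. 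Applying $\zeta$ to \eqref{IDEN2} (fixed on the left since $\zeta(y)=y$) gives $\zeta(z^{-l}v^{-m}u^{-n})=z^{-l}u^{-n}v^{-l-m-n}$, and comparison yields $v^{l+n}=1$, that is, $p^c\mid l+n$; together with $p^c\mid l$ this forces $p^c\mid n$, so $n=0$ as $0\le n<p^c$.

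For the remaining identities $\la x\ra\cap\la v\ra=1$ and $\la y\ra\cap\la u\ra=1$, suppose $x^s=v^t$. Centrality of $v^t$ gives $[x^s,y]=z^s u^{{s\choose 2}}=1$, so $z^s\in\la z\ra\cap G_3=1$, and thus $x^s\in G'$ forces $p^d\mid s$. Writing $s=s'p^d$ and invoking (iv), $x^s=(x^{p^d})^{s'}=z^{l s'}u^{m s'}$; matching this against $v^t$ and applying $\la z\ra\cap G_3=1$ and $\la u\ra\cap\la v\ra=1$ forces $v^t=1$. The identity $\la y\ra\cap\la u\ra=1$ then follows by $\tau$-symmetry. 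The main delicacy of the argument is the bootstrapping order outlined at the start; beyond that, each step amounts to a routine application of the automorphisms tabulated in Lemma~\ref{TRANS} together with the commutator identities of Corollary~\ref{META6}.
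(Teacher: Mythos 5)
Your proof is correct, and although it runs on the same engine as the paper's argument --- applying the automorphisms tabulated in Lemma~\ref{TRANS} to relations in the abelian group $G'$ and exploiting the trivial intersections --- it departs from the paper's proof at several substantive points, generally to good effect. You reorder the claims, postponing $\la x\ra\cap\la v\ra=1$ until after (iii) and (iv): the paper proves it inside (ii) by applying $\xi$ to $x^i=v^j$ and manipulating congruences involving ${i\choose2}$ and ${i\choose3}$, whereas you read off $x^s=v^t\in G_3\subseteq G'$, deduce $p^d\mid s$, and then use the normal form $x^{p^d}=z^lu^m$ supplied by (iv) --- shorter and free of binomial coefficients (note that your preliminary computation $[x^s,y]=z^su^{s\choose 2}=1$ is redundant: $x^s\in G'$ already follows from $x^s=v^t$, not from that step). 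For $b\le d$ you observe $[x^{p^d},y]\in[G',G]=G_3$ and invoke Corollary~\ref{META6}, instead of the paper's two-fold computation of $[x,y^{p^d}]$ via \eqref{IDEN2}; both work. For (iv) you apply $\iota$ to \eqref{IDEN1} (giving $p^c\mid l$) and $\zeta$ to \eqref{IDEN2} (giving $p^c\mid l+n$), which avoids the paper's Hall--Petrescu expansion of $(xy^{-1})^{p^d}$ and the use of $c\le d$ altogether; your bootstrapping order is internally consistent and non-circular. One caveat: the paper's proof of (iv) is arranged to produce the congruences \eqref{EQN1}, \eqref{EQN2} and \eqref{OEQN}, which are cited again in the proof of Corollary~\ref{ALLCASE}; your $\iota$-relation $z^{2l}=u^lv^l$ reproduces \eqref{BEQN4}--\eqref{BEQN5} early, but if your proof were substituted for the paper's, \eqref{EQN2} and \eqref{OEQN} would have to be re-derived there. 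Finally, rename the exponents in your step $z^s=u^av^b$, since $a$ and $b$ are already reserved for $o(x)=p^a$ and $o(z)=p^b$.
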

\begin{proof} (i) By Lemma~\ref{TRANS}, the automorphism $\tau$ of $G$ transposes $x$ and $y$, and $u$ and $v^{-1}$. So $o(y)=o(\tau(y))=o(x)$ and $o(u)=o(\tau(u))=o(v^{-1})=o(v)$.

(ii) Assume that $\la u\ra\cap\la v\ra=\la v^h\ra$. Since $o(u)=o(v)$, we have $u^h=v^{hf}$ for some $f$ coprime to $p$. Applying the automorphism $\zeta$ to the relation we obtain that $\zeta(u^h)=\zeta(v^{hf})$. By Lemma~\ref{TRANS} $\zeta(u^h)=\zeta(u)^h=(uv)^h=u^hv^h$ and $\zeta(v^{hf})=v^{hf}=u^h$. It follows that $u^hv^h=u^h$, and hence $v^h=1$. This proves that $\la u\ra\cap\la v\ra=1$.

Moreover, assume that $\la z\ra\cap\la u,v\ra=\la z^k\ra$. Then $z^k=u^iv^j$ for some integers $i$ and $j$. Applying $\tau$ to the relation we get $\tau(z^k)=\tau(u^iv^j)$. By Lemma~\ref{TRANS} we have $\tau(z^k)=\tau(z)^k=z^{-k}=u^{-i}v^{-j}$ and $\tau(u^iv^j)=\tau(u)^i\tau(v)^j=u^{-j}v^{-i}.$ It follows that $u^{i-j}=v^{i-j}$. Recall that $o(u)=o(v)=p^c$ and $\la u\ra\cap\la v\ra=1$. So we have $i\equiv j\pmod{p^c}$, and hence 
\[
z^k=u^iv^i.
\]
 Applying $\zeta$ to the relation we get $\zeta(z^k)=\zeta(u^iv^i)$. By Lemma~\ref{TRANS} we have $\zeta(z^k)=\zeta(z)^k=(zv)^k=z^kv^k$ and $\zeta(u^iv^i)=\zeta(u)^i\zeta(v)^i=u^iv^{2i}$. So we obtain that $z^k=u^iv^{2i-k}$. Comparing this with previous relation we get $v^{i-k}=1$. So $i\equiv k\pmod{p^c}$.  On the other hand, applying $\iota$ to the relation $z^k=u^iv^i$ we deduce that $z^k=u^{k-i}v^{k-i}$. Since $i\equiv k \pmod{p^c}$ and $o(u)=o(v)=p^c$, we obtain that $z^k=1$. Consequently $\la z\ra\cap\la u,v\ra=1$.

Finally, assume that $\la x\ra\cap\la v\ra=\la x^i\ra$ where $i\geq0$. Then $x^i=v^j$ for some integer $j$. Applying $\tau$ to the relation we have $y^i=\tau(x^i)=\tau(v^j)=u^{-j}$. By Lemma~\ref{META6} and Lemma~\ref{TRANS} we have
\[
v^j=\xi(v^j)=\xi(x^i)=(xy^{-1})^i=x^iy^{-i}z^{i\choose 2}u^{i\choose 3}v^{{i\choose 3}-i{i\choose 2}}=z^{i\choose2}u^{j+{i\choose3}}v^{j+{i\choose3}-i{i\choose2}},
\]
which is equivalent to
\[
z^{-{i\choose 2}}=u^{j+{i\choose3}}v^{{i\choose3}-i{i\choose2}}.
\]
Recall that $\la u\ra\cap\la v\ra=\la z\ra\cap\la u,v\ra=1$, so we have
\begin{align}
&{i\choose 2}\equiv0\pmod{p^b},\label{DIS1}\\
&{i\choose3}\equiv-j\pmod{p^c},\label{DIS2}\\
&{i\choose3}\equiv i{i\choose2}\pmod{p^c}.\label{DIS3}
\end{align}
By induction we deduce that $u^{p^b}=[z,x]^{p^b}=[z^{p^b},x]=1$. Recall that $o(u)=p^c$. So we get $c\leq b$. So by \eqref{DIS1} we have ${i\choose2}\equiv0\pmod{p^c}$. Hence  \eqref{DIS3} is reduced to ${i\choose3}\equiv0\pmod{p^c}$. Consequently, \eqref{DIS2} is reduced to $j\equiv0\pmod{p^c}$. Therefore by the assumption $\la x\ra\cap\la v\ra=1$. Applying $\tau$ we also have $\la y\ra\cap\la u\ra=\tau(\la x\ra\cap\la v\ra)=1$.

(iii) Since $G$ has class 3, $G_4=1$ and $G_3>1$. Recall that $G_3=\la u,v\ra$ and $o(u)=o(v)=p^c$. So we have $c\geq 1$. In the proof of (ii) we have already shown $c\leq b$. Moreover, by Corollary~\ref{META6} we have $[x,y^{p^d}]=z^{p^d}v^{{p^d}\choose 2}$. By \eqref{IDEN2} we also have 
\[
[x,y^{p^d}]=[x,z^{-l}v^{-m}u^{-n}]=[x,z^{-l}]=[x,z]^{-l}=u^{l}.
\]
It follows that $z^{p^d}v^{{p^d}\choose 2}=u^l$, which implies that $z^{p^d}=u^{l}v^{-{p^d\choose2}}$. By Lemma~\ref{DISJOINT}, $\la z\ra\cap\la u,v\ra=\la u\ra\cap\la v\ra=1$. So the above relation implies that $z^{p^d}=1$. Consequently $b\leq d$.

(iv) Applying $\pi$ to \eqref{IDEN1} and by Lemma~\ref{TRANS} we have
\begin{align*}
x^{-p^d}=\pi(x^{p^d})&=\pi(z^lu^mv^n)=z^{-l}u^{l+m}v^{-n}
\end{align*}
 Combining this with \eqref{IDEN1} we deduce that $z^{-l}u^{l+m}v^{-n}=z^{-l}u^{-m}v^{-n}$, or equivalently, $u^{l+2m}=1$. Hence
 \begin{align}\label{EQN1}
 l+2m\equiv0\pmod{p^c}.
 \end{align}

On the other hand, applying $\xi$ to \eqref{IDEN1} we have
 \begin{align*}
 z^lu^mv^{-l-m+n}&=\xi(z^lu^mv^n)=\xi(x^{p^d})=(xy^{-1})^{p^d}\\
 &=x^{p^d}y^{-p^d}z^{p^d\choose 2}u^{p^d\choose 3}v^{{p^d\choose 3}-p^d{p^d\choose 2}}\\
 &=(z^lu^mv^n)(z^{l}u^{n}v^{m})z^{p^d\choose 2}u^{p^d\choose 3}v^{{p^d\choose 3}-p^d{p^d\choose 2}}\\
 &=z^{2l+{p^d\choose2}}u^{m+n+{p^d\choose3}}v^{m+n+{p^d\choose 3}-p^d{p^d\choose 2}}.
 \end{align*}
Hence \[z^{-l-{p^d\choose2}}=u^{n+{p^d\choose3}}v^{l+2m+{p^d\choose3}-p^d{p^d\choose2}}.\]
    By (ii), $\la z\ra\cap\la u,v\ra=\la u\ra\cap\la v\ra=1$. It follows that
 \begin{align}
 &l+{p^d\choose2}\equiv0\pmod{p^b},\label{EQN2}\\
 &n+{p^d\choose3}\equiv0\pmod{p^c},\label{EQN3}\\
 &l+2m+{p^d\choose3}-p^d{p^d\choose2}\equiv0\pmod{p^c}.\label{EQN4}
 \end{align}
 By (iii), $c\leq d$. so $p^d{p^d\choose2}\equiv0\pmod{p^c}$. Combining this with \eqref{EQN1} the congruence \eqref{EQN4} is reduced to
 \begin{align}\label{OEQN}
  {p^d\choose 3}\equiv0\pmod{p^c}.
  \end{align}
Hence \eqref{EQN3} is reduced to $n\equiv0\pmod{p^c}.$
\end{proof}

\begin{corollary}\label{ALLCASE}
The values of numberical parameters $l,m,n$ and $a,b,c,d$ defined above bolong to one of the following cases:
\begin{enumerate}
\item[\rm(A)] $p$ is odd,
\begin{enumerate}
\item[\rm(i)]$p>3$ and  $l=m=n=0$ and $1\leq c\leq b\leq d=a$;
    \item[\rm(ii)]$p=3$ and $l=m=n=0$ and either $1\leq c< b=d=a$ or $1\leq c\leq b<d=a$;
\end{enumerate}
\item[\rm(B)] $p=2$,
\begin{enumerate}
\item[\rm (iii)]$l=m=n=0$ and $1\leq c\leq b<d=a$,
    \item[\rm (iv)] $l=n=0$ and $m=2^{c-1}$ and $1\leq c\leq b\leq d-1=a-2$,
     \item[\rm (v)] $l=2^{b-1}$ and $m=n=0$ and $1\leq c\leq b-1$ and $b=d=a-1$,
      \item[\rm (vi)]$l=2^{b-1}$, $m=q=2^{c-1}$, $n=0$ and $1\leq c\leq b-1$ and $b=d=a-1$.
\end{enumerate}
\end{enumerate}
\end{corollary}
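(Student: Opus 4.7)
The plan is to distill from the external symmetries $\tau,\pi,\zeta,\iota,\xi$ of Lemma~\ref{TRANS} a complete system of congruences on the parameters $l,m,n,a,b,c,d$ and then solve it according to the residue of $p$. Most of the work has already been done in Lemma~\ref{DISJOINT}, which yields $n=0$ together with the three congruences $l+2m\equiv0\pmod{p^c}$ from \eqref{EQN1}, $l+\binom{p^d}{2}\equiv0\pmod{p^b}$ from \eqref{EQN2}, and $\binom{p^d}{3}\equiv0\pmod{p^c}$ from \eqref{OEQN}. One additional relation is obtained by applying $\iota$ to \eqref{IDEN1}: equating $\iota(x^{p^d})=x^{-p^d}$ with $\iota(z^lu^mv^n)=z^lu^{-l-m}v^{-l-n}$ (using Lemma~\ref{TRANS}, $n=0$, and the abelianness of $G'$) and comparing with $(z^lu^mv^n)^{-1}=z^{-l}u^{-m}v^{-n}$ yields, through Lemma~\ref{DISJOINT}(ii), the congruences $2l\equiv0\pmod{p^b}$ and $l\equiv0\pmod{p^c}$; the first is subsumed by \eqref{EQN2}, but the second will be decisive when $p=2$.

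The case split is then driven by the exact powers of $p$ dividing $\binom{p^d}{2}$ and $\binom{p^d}{3}$. When $p>3$, both binomials are divisible by $p^d$, so \eqref{EQN2} forces $l=0$ and \eqref{EQN1} (with $2$ a unit modulo $p^c$) forces $m=0$; then $x^{p^d}=1$ gives $d=a$, placing us in case (i). When $p=3$, the same argument produces $l=m=0$ and $d=a$, but now $\binom{3^d}{3}$ is divisible by exactly $3^{d-1}$, so \eqref{OEQN} becomes $c\le d-1$, i.e., $c<d$; splitting according to whether $b=d$ or $b<d$ delivers the two subcases of (ii).

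For $p=2$ the loss of invertibility of $2$, combined with $\binom{2^d}{2}$ being divisible by exactly $2^{d-1}$, produces the four sub-situations (iii)--(vi). Reading \eqref{EQN2} modulo $2^b$ gives $l=0$ if $b<d$ and $l=2^{b-1}$ if $b=d$; the extra congruence $l\equiv0\pmod{2^c}$ obtained from $\iota$ then rules out $c=b$ in the second scenario, leaving $c\le b-1$. Feeding these values into \eqref{EQN1} yields $m\in\{0,2^{c-1}\}$ in every remaining sub-situation. Finally the value of $d$ is pinned down by computing the order of $x^{p^d}=z^lu^m$: when $l=m=0$ the element is trivial and $d=a$, giving (iii); when exactly one of $l,m$ is nonzero the element has order $2$, so $d=a-1$, giving (iv) if $b<d$ and (v) if $b=d$; and when both are nonzero the disjointness $\langle z\rangle\cap\langle u\rangle=1$ from Lemma~\ref{DISJOINT}(ii) again forces order $2$, hence $d=a-1=b$, giving (vi).

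The main obstacle I anticipate lies in the prime $p=2$: the loss of invertibility of $2$ modulo $2^c$ doubles the algebraic possibilities for $(l,m)$, and one genuinely needs the extra congruence obtained from $\iota$ --- beyond the three already extracted in Lemma~\ref{DISJOINT} --- to exclude the spurious case $c=b$ with $l=2^{b-1}$. A secondary point of care is the order computation in case (vi), which relies on the full disjointness statement of Lemma~\ref{DISJOINT}(ii) to guarantee that $z^{2^{b-1}}u^{2^{c-1}}$ does not collapse to the identity.
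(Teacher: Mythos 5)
Your proposal is correct and follows essentially the same route as the paper: like the authors, you apply $\iota$ to \eqref{IDEN1} to obtain $z^{2l}=u^lv^l$, hence $2l\equiv0\pmod{p^b}$ and the decisive $l\equiv0\pmod{p^c}$, and then combine these with \eqref{EQN1}, \eqref{EQN2}, \eqref{OEQN} and the order of $x^{p^d}=z^lu^m$ to settle the cases. The only difference is organizational (for $p=2$ you determine $l$ from \eqref{EQN2} via the dichotomy $b<d$ versus $b=d$ before listing the values of $m$, whereas the paper enumerates the four $(l,m)$ possibilities first), which does not change the argument.
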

\begin{proof}
By Lemma~\ref{DISJOINT}(iv), $n=0$, so Eqs.~\eqref{IDEN1} and \eqref{IDEN2} are reduced to
 \begin{align}\label{IDEN4}
 x^{p^d}=z^lu^m\quad\text{and}\quad y^{p^d}=z^{-l}v^{-m}.
 \end{align}
 Applying $\iota$ to the first relation we have \[z^lu^{-l-m}v^{-l}= \iota(z^lu^m)=\iota(x^{p^d})=x^{-p^d}=z^{-l}u^{-m},\]
 so we obtain $z^{2l}=u^{l}v^l$. Since $\la z\ra\cap\la u,v\ra=\la u\ra\cap\la v\ra=1$ (see Lemma~\ref{DISJOINT}(ii)), we get
 \begin{align}
  2l&\equiv0\pmod{p^b},\label{BEQN4}\\
  l&\equiv0\pmod{p^c}.\label{BEQN5}
   \end{align}Combining \eqref{BEQN5} with \eqref{EQN1} we obtain that \begin{align}
   2m\equiv0\pmod{p^c}.\label{BEQN6}
   \end{align}

\begin{case}[A]If $p$ is odd, by \eqref{BEQN4} and \eqref{BEQN6}  we have $l=m=0$. Hence by the first relation in \eqref{IDEN4} we have $x^{p^d}=1$. Therefore $d=a$. Combining this with Lemma~\ref{DISJOINT}(iii) we thus obtain $1\leq c\leq b\leq d=a$. Eq.~\eqref{OEQN} motivates us to distinguish two subcases.
\begin{subcase}[i]$p>3$.\par
In this case, \eqref{OEQN} is automatically satisfied.
\end{subcase}

\begin{subcase}[ii]$p=3$.\par
Since $3^{d-1}||{p^d\choose3}$, by \eqref{OEQN}, we have $c<d$. So either $1\leq c<b=d=a$ or $1\leq c\leq b<d=a$.
\end{subcase}
\end{case}
\begin{case}[B] If $p=2$, then by \eqref{BEQN4} and \eqref{BEQN6} we have one of the following four subcases: (iii) $l=m=0$; (iv) $l=0$ and $m=2^{c-1}$; (v) $l=2^{b-1}$ and $m=0$; (vi) $l=2^{b-1}$ and $m=2^{c-1}$. 
\begin{subcase}[iii] $l=m=0$.\par
 In this case,  Eq.~\eqref{EQN2} is reduced to ${2^d\choose 2}\equiv0\pmod{2^b}$, so $b\leq d-1$; The first relation in \eqref{IDEN4} is reduced to $x^{p^d}=1$, so $d=a$.  Combining these the inequality in Lemma~\ref{DISJOINT}(iii), we get $1\leq c\leq b\leq d-1=a-1$.
 \end{subcase}

\begin{subcase}[iv] $l=0$ and $m=2^{c-1}$.\par
In this case,  Eq.~\eqref{EQN2} is also reduced to ${2^d\choose 2}\equiv0\pmod{2^b}$, so $b\leq d-1$;  the first relation in \eqref{IDEN4} is reduced to $x^{p^d}=u^{2^{c-1}}$, so $d=a-1$. Combining these with the inequalities in Lemma~\ref{DISJOINT}(iii)-(iv) we thus obtain $1\leq c\leq b\leq d-1=a-2$.
\end{subcase}

\begin{subcase}[v] $l=2^{b-1}$ and $m=0$.\par
In this case,  Eq.~\eqref{EQN2} is reduced to $2^{b-1}+{2^d\choose 2}\equiv0\pmod{2^b}$. By \eqref{BEQN5} we have $2^{b-1}\equiv0\pmod{2^c}$. So $c\leq b-1$ and $b=d$. Note that the first relation in \eqref{IDEN4} is reduced to $x^{p^d}=z^{2^{b-1}}$, so $d=a-1$. Combining these with the inequalities in Lemma~\ref{DISJOINT}(iii)-(iv) we thus obtain $1\leq c\leq b= d=a-1$
\end{subcase}

\begin{subcase}[vi] $l=2^{b-1}$ and $m=2^{c-1}$.\par
In this case,  Eq.~\eqref{EQN2} is reduced to $2^{b-1}+{2^d\choose 2}\equiv0\pmod{2^b}$. By \eqref{BEQN5} we have $2^{b-1}\equiv0\pmod{2^c}$. So $c\leq b-1$ and $b=d$. Note that the first relation in \eqref{IDEN4} is reduced to $x^{p^d}=z^{2^{b-1}}u^{2^{c-1}}$, so $d=a-1$. Combining these with the inequalities in Lemma~\ref{DISJOINT}(iii)-(iv) we thus obtain $1\leq c\leq b= d=a-1$
\end{subcase}
\end{case}
\end{proof}

\section{Classification}
The following theorem is the main result of the paper.
\begin{theorem}\label{MAIN}
Let $D=(G,x,y)$ be a totall symmetric dessin. If $G$ is a $p$-group of class three, then $G$ is isomorphic to one of the following groups:
\begin{itemize}
\item[\rm(A)] $p$ is odd,
\begin{enumerate}
\item[\rm(i)] $p>3$ and $1\leq c\leq b\leq a,$
\begin{align*}G=\la x,y|&x^{p^a}=y^{p^a}=z^{p^b}=u^{p^c}=v^{p^c}=
[x,u]=[x,v]=\\&[y,u]=[y,v]=1, z:=[x,y], u:=[z,x], v:=[z,y]\ra.
\end{align*}
\item[\rm(ii)] $p=3$ and $1\leq c< b= a$ or $1\leq c\leq b\leq a-1$,
\begin{align*}G=\la x,y|&x^{3^a}=y^{3^a}=z^{3^b}=u^{3^c}=v^{3^c}=
[x,u]=[x,v]=\\&[y,u]=[y,v]=1, z:=[x,y], u:=[z,x], v:=[z,y]\ra.
\end{align*}
\end{enumerate}
\item[\rm(B)]$p=2$,
\begin{enumerate}
\item[\rm(iii)]$1\leq c\leq b\leq a-1,$
\begin{align*}
G=\la x,y|&x^{2^a}=y^{2^a}=z^{2^{b}}=u^{2^c}=v^{2^c}=[x,u]=[x,v]=
[y,u]=[y,v]=1,\\ &z:=[x,y], u:=[z,x], v:=[z,y]\ra.
 \end{align*}
\item[\rm(iv)]$1\leq c\leq b\leq a-2,$
\begin{align*}G=\la x,y|&x^{2^a}=y^{2^a}=z^{2^{b}}=u^{2^c}=v^{2^c}=[x,u]=[x,v]=
[y,u]=[y,v]=1, \\x^{2^{a-1}}&=u^{2^{c-1}},y^{2^{a-1}}=v^{2^{c-1}}, z:=[x,y], u:=[z,x], v:=[z,y]\ra.
 \end{align*}
\item[\rm(v)]$1\leq c\leq  a-2,$
\begin{align*}G=\la x,y|&x^{2^a}=y^{2^a}=z^{2^{a-1}}=u^{2^c}=v^{2^c}=[x,u]=[x,v]=
[y,u]=[y,v]=1,\\ &x^{2^{a-1}}=z^{2^{a-2}}, y^{2^{a-1}}=z^{2^{a-2}}, z:=[x,y], u:=[z,x],v:=[z,y]\ra.
 \end{align*}
\item[\rm(vi)]$1\leq c\leq a-2,$
\begin{align*}G=\la x,y|&x^{2^a}=y^{2^a}=z^{2^{a-1}}=u^{2^c}=v^{2^c}=[x,u]=[x,v]=
[y,u]=[y,v]=1,\\ &x^{2^{a-1}}=z^{2^{a-2}}u^{2^{c-1}}, y^{2^{a-1}}=z^{2^{a-2}}v^{2^{c-1}}, z:=[x,y], u:=[z,x], v:=[z,y]\ra.
 \end{align*}
\end{enumerate}
\end{itemize}
Moreover, each of the above groups underlies a unique regular desssin, and the groups from distinct families, or from the same family but with distinct parameters, are not isomorphic.
\end{theorem}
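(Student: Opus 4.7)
The plan is to prove Theorem~\ref{MAIN} in three stages: derive the defining presentations in each case, verify that each candidate presentation genuinely realizes a group of the expected order admitting the dessin operations as automorphisms, and finally establish uniqueness of the regular dessin and non-isomorphism between different families.

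First, starting from an arbitrary totally symmetric $p$-dessin $(G,x,y)$ of class three, Corollary~\ref{ALLCASE} has already partitioned the numerical invariants $(a,b,c,d,l,m,n)$ into exactly the six cases listed in the theorem. What remains is to compile a complete set of defining relations. The commutator relations $[x,u]=[x,v]=[y,u]=[y,v]=1$ follow from $u,v\in G_3\le Z(G)$, which holds because $G$ has class three; the orders $o(x)=o(y)=p^a$, $o(z)=p^b$, $o(u)=o(v)=p^c$ come from Lemma~\ref{DISJOINT}(i) together with the definitions of $a,b,c$; and the case-specific power identities (such as $x^{2^{a-1}}=u^{2^{c-1}}$ in family~(iv)) are obtained by substituting the admissible values of $l,m,n$ into~\eqref{IDEN4}, with the companion identity for $y^{p^d}$ produced by applying $\tau$ and using Lemma~\ref{TRANS}. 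This yields precisely the six presentations in the statement.

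Second, for each presentation one must verify two things: that it defines a group of the expected order, and that it supports the dessin operations as automorphisms. Since class three forces $G''=1$, Corollary~\ref{META7} provides a normal form $x^iy^jz^ku^\ell v^m$ for every element, giving an upper bound on $|G|$ matching the expected count in each family. To show the bound is attained one realizes $G$ as an explicit central extension of $G/G_3$ by $G_3\cong C_{p^c}\times C_{p^c}$, reading the required $2$-cocycle off the metabelian identities in Corollary~\ref{META7}; alternatively one views $G$ as a quotient of the free nilpotent-of-class-three group on two generators and checks the added relations introduce no further collapse. For the external symmetry, each of $\tau$, $\pi$, $\zeta$ from~\eqref{MinG} is shown to preserve every defining relation by direct substitution, with Lemma~\ref{TRANS} taking care of the commutator layer; the only subtlety arises in families (iv)--(vi), where sign changes under $\tau$ and $\pi$ applied to exponents like $2^{c-1}$ vanish because $v^{-2^{c-1}}=v^{2^{c-1}}$ when $p=2$. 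Since $\tau$, $\pi$ and $\zeta$ all lift to $\Aut(G)$, so does every element of $\Omega$, making the dessin totally symmetric.

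Third, by Proposition~\ref{FUNDM}(ii) the regular dessins with automorphism group $G$ correspond to $\Aut(G)$-orbits on generating pairs. Lemma~\ref{BURNSIDE} gives $G/\Phi(G)\cong C_p\times C_p$, so a generating pair of $G$ is any lift of an $\mathbb{F}_p$-basis of this quotient; the induced action of $\Aut(G)$ on $G/\Phi(G)$ contains the images of $\tau$, $\pi$, $\zeta$, which generate $\mathrm{GL}_2(\mathbb{F}_p)$ (a direct matrix check, valid also in characteristic two). Thus $\Aut(G)$ is transitive on generating pairs and the regular dessin is unique. To distinguish different families and parameters, one reads off the invariants $|G|$, $|G'|=p^{b+2c}$, $|G_3|=p^{2c}$, and the position of $x^{p^d}$ in the chain $1\le G_3\le G'$; these structural data separate every case listed.

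The main obstacle is the existence statement in stage two for families~(iv)--(vi), where relations of the form $x^{2^{a-1}}=z^{2^{b-1}}u^{2^{c-1}}$ mix all three layers of the lower central series; verifying that such a relation is consistent with the commutator structure, and does not force an unexpected collapse, demands careful bookkeeping with the metabelian identities of Corollaries~\ref{META6} and~\ref{META7}. The cleanest route is to build $G$ as a central extension with cocycle prescribed explicitly by Corollary~\ref{META7}, check the $2$-cocycle condition, and confirm the targeted element orders directly; this simultaneously establishes the presentation and produces a concrete model well-suited to the subsequent symmetry verification.
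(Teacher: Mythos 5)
The derivation of the presentations (your stage one) and the consistency/order checks (your stage two) are fine -- indeed stage two spells out an existence verification that the paper itself leaves implicit -- but your stage three has a genuine gap at the crucial point. You deduce transitivity of $\Aut(G)$ on generating pairs from transitivity of the induced action of $\Aut(G)$ on ordered bases of $G/\Phi(G)$. That inference is invalid: by Lemma~\ref{BURNSIDE} every lift of a basis of $G/\Phi(G)$ is a generating pair, so each basis has $|\Phi(G)|^2$ lifts, and transitivity on generating pairs requires in addition that the kernel of $\Aut(G)\to \mathrm{GL}(G/\Phi(G))$ act transitively on these lifts, i.e.\ that every assignment $x\mapsto x\phi_1$, $y\mapsto y\phi_2$ with $\phi_1,\phi_2\in\Phi(G)$ extend to an automorphism (equivalently, uniqueness of the dessin amounts to $|\Aut(G)|=|\mathrm{GL}_2(\mathbb{F}_p)|\cdot|\Phi(G)|^2$, and you control only the image, not the kernel). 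This kernel verification is exactly the computational heart of the paper's proof: an arbitrary generating pair is written as $x_1=x^{i_1}y^{i_2}z^{i_3}u^{i_4}v^{i_5}$, $y_1=x^{j_1}y^{j_2}z^{j_3}u^{j_4}v^{j_5}$ with $i_1j_2-i_2j_1\not\equiv 0\pmod p$, the elements $z_1,u_1,v_1$ are computed via the metabelian identities, and the defining relations (in particular $x_1^{2^{a-1}}=z_1^{2^{a-2}}u_1^{2^{c-1}}$ and its companions in families (iv)--(vi)) are checked for \emph{all} values of $i_3,i_4,i_5,j_3,j_4,j_5$. Your proposal omits this step entirely, and nothing in your central-extension construction supplies it.

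Moreover, even the weaker statement you do argue is false as stated for $p>3$: the automorphisms $\tau,\pi,\zeta$ come from $\Aut(F_2)$, so their images in $\mathrm{GL}_2(\mathbb{F}_p)$ have determinant $\pm1$ and generate only the proper subgroup of matrices of determinant $\pm1$ when $p>3$; hence they do not give transitivity even on ordered bases of $G/\Phi(G)$. Automorphisms such as $x\mapsto x^r$, $y\mapsto y$ with $r$ coprime to $p$ do exist for the groups in Theorem~\ref{MAIN}, but proving this is again part of the omitted relation check (in the paper it is subsumed in the verification for arbitrary $i_1,i_2,j_1,j_2$ with nonzero determinant). Until the Frattini-twist/kernel computation is carried out, the uniqueness claim, and with it the total symmetry of the listed groups, is unproved.
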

\begin{proof}
By the assumption we have $G=\la x,y\ra$. Define $z=[x,y]$, $u=[z,x]$ and $v=[z,y]$. Since $G$ is of class three, $G$ is metabelan. So by Lemma~\ref{DISJOINT}(i), $o(x)=o(y)$ and $o(u)=o(v)$. Define $o(x)=p^a$, $o(z)=p^b$ and $o(u)=p^c$. By Lemma~\ref{META3}, $G_3=\la u,v\ra$ and $G_2=\la z,u,v\ra$. Let $H=\la x,z,u,v\ra$. Then we obtain a series of normal subgroups of $G$:
\[
1\leq G_3\leq G_2\leq H\leq G,
\]
where $G_2/G_3=\la zG_3\ra$, $H/G_2=\la xG_2\ra$ and $G/H=\la yH\ra$ are all cyclic groups. Assume that $\la x\ra\cap G_2=\la x^{p^d}\ra$ where $d$ is a nonnegative integer. Then $x^{p^d}=z^lu^mv^n$ where $l,m,n$ are nonnegative integers such that 
\[
0\leq l<p^b \quad\text{and}\quad 0\leq m,n<p^c.
\]
 By Lemma~\ref{INTERSECTION}, $\langle y\rangle\cap H=\langle y\rangle\cap G_2$. Since $\langle y\rangle\cap G_2=\tau(\langle x\rangle\cap G_2)$, we have 
\[
\langle y\rangle\cap H=\tau(\langle x\rangle\cap G_2)=\tau(\langle x^{p^d}\rangle)=\langle y^{p^d}\rangle.
\]
We have $y^{p^d}=\tau(x^{p^d})=\tau(z^lu^mv^n)=z^{-l}v^{-m}u^{-n}.$ By Lemma~\ref{DISJOINT}(iv), $n=0$. Therefore, by Lemma~\ref{DISJOINT}(ii) the group $G$ has a presentation
\begin{equation*}
\begin{aligned}
G=\la x,y\mid x^{p^a}&=y^{p^a}=z^{p^b}=u^{p^c}=v^{p^c}=[x,u]=[x,v]=[y,u]=[y,v]=1, \\ &x^{p^d}=z^lu^m, y^{p^d}=z^{-l}v^{-m},z:=[x,y], u:=[z,x], v:=[z,y]\ra,
\end{aligned}
\end{equation*}
 By filling the numerical conditions proved in Corollary~\ref{ALLCASE} the presentation is simplified to the stated forms.

To show that the group $G$ underlies a unique regular dessin, by Proposition~\ref{FUNDM}(ii) it is sufficient to show that $\Aut(G)$ is transitive on the generating pairs of $G$.  By Burnside's Basis Theorem each generating pair $(x_1,y_1)$ of $G$ can be written as the form
 \[
( x_1,y_1)=(x^{i_1}y^{i_2}z^{i_3}u^{i_4}v^{i_5}, x^{j_1}y^{j_2}z^{j_3}u^{j_4}v^{j_5})
 \]
where
 \begin{align}\label{GEN2}
i_1j_2-i_2j_1\not\equiv0\pmod{p}.
 \end{align}
So it suffices to verify that the assignment $x\mapsto x_1,y\mapsto y_1$ extends to a group automorphism of $G$. Define $z_1=[x_1,y_1]$, $u_1=[z_1,x_1]$ and $v_1=[z_1,y_1]$. Then
\begin{align*}
z_1=&[x_1,y_1]=[x^{i_1}y^{i_2}z^{i_3}u^{i_4}v^{i_5},x^{j_1}y^{j_2}z^{j_3}u^{j_4}v^{j_5}]\\
=&[x^{i_1}y^{i_2}z^{i_3},x^{j_1}y^{j_2}z^{j_3}]\\
=&[x^{i_1}y^{i_2},x^{j_1}y^{j_2}z^{j_3}][z^{i_3},x^{j_1}y^{j_2}z^{j_3}]\\
=&[x^{i_1}y^{i_2},x^{j_1}y^{j_2}][x^{i_1}y^{i_2},z^{j_3}][z^{i_3},x^{j_1}y^{j_2}]\\
=&z^{i_1j_2-i_2j_1}u^{j_2{i_1\choose2}-i_2{j_1\choose2}}v^{i_1i_2j_2-j_1i_2j_2+i_1{j_2\choose2}-j_1{i_2\choose2}}\\
&[x^{i_1},z^{j_3}][y^{i_2},z^{j_3}][z^{i_3},y^{j_2}][z^{i_3},x^{j_1}]\\
=&z^Lu^Mv^N,
\end{align*}
where
\begin{align}
L=&i_1j_2-i_2j_1,\label{FEQN4}\\
M=&j_2{i_1\choose2}-i_2{j_1\choose2}-i_1j_3+i_3j_1,\label{FEQN5}\\ N=&i_1i_2j_2-j_1i_2j_2+i_1{j_2\choose2}-j_1{i_2\choose2}-i_2j_3+i_3j_2.\label{FEQN6}
\end{align}
We also have
\begin{align*}
u_1=&[z_1,x_1]=[z^Lu^Mv^N,x^{i_1}y^{i_2}z^{i_3}u^{i_4}v^{i_5}]=[z^L,x^{i_1}y^{i_2}z^{i_3}]\\
=&[z^L,x^{i_1}y^{i_2}]=[z^L,y^{i_2}][z^L,x^{i_1}]=u^{Li_1}v^{Li_2},
\end{align*}
and
\begin{align*}
v_1:=&[z_1,y_1]=[z^Lu^Mv^N,x^{j_1}y^{j_2}z^{j_3}u^{j_4}v^{j_5}]=[z^L,x^{j_1}y^{j_2}z^{j_3}]\\
=&[z^L,x^{j_1}y^{j_2}]=[z^L,y^{j_2}][z^L,x^{j_1}]=u^{Lj_1}v^{Lj_2}.
\end{align*}

 Clearly, in each case \[z_1^{p^b}=u_1^{p^c}=v_1^{p^c}=[x_1,u_1]=[x_1,v_1]=[y_1,u_1]=[y_1,v_1]=1.\] 
In what follows we verify that $x_1$ and $y_1$ fulfil the remaining defining relations case by case.

\begin{case}[i] $p>3$, $1\leq c\leq b\leq d=a$.\par
 In this case we need to show that $x_1^{p^a}=1$ and $y_1^{p^a}=1$.  By Corollary~\ref{META6} and Corollary~\ref{META7} we have
\begin{align*}
x_1^{p^a}=&(x^{i_1}y^{i_2}z^{i_3}u^{i_4}v^{i_5})^{p^a}=\big((x^{i_1}y^{i_2})z^{i_3}\big)^{p^a}u^{i_4p^a}v^{i_5p^a}\nonumber\\
=&(x^{i_1}y^{i_2})^{p^a}z^{i_3p^a}[x^{i_1}y^{i_2},z]^{-i_3{p^a\choose2}}u^{i_4p^a}v^{i_5p^a}\\
=&x^{i_1p^a}y^{i_2p^a}z^{R}u^Sv^T=z^{R}u^Sv^T,
\end{align*}
where
\begin{align}
R=&i_3p^a-i_1i_2{p^a\choose2},\label{FEQN1}\\
S=&-i_2{i_1\choose2}{p^a\choose2}-i_1^2i_2{p^a\choose3}+i_1i_3{p^a\choose2}+i_4p^a,\label{FEQN2}\\
T=&-i_1{i_2\choose2}{p^a\choose2}+i_1i_2^2\big({p^a\choose3}+(1-p^a){p^a\choose2}\big)+i_2i_3{p^a\choose2}+i_5p^a.\label{FEQN3}
\end{align}
Recall that $p>3$ is odd, so $p^a|{p^a\choose2}$ and $p^a|{p^a\choose3}$. So $p^a\mid R$, $p^a\mid S$ and $p^a\mid T$, and hence $x_1^{p^a}=1$. Similarly, $y_1^{p^a}=1$.
\end{case}

\begin{case}[ii] $p=3$ and $1\leq c< b=d=a$ or $1\leq c\leq b<e=d=a$.\par
Note that $3^a|{3^a\choose2}$ and $3^{a-1}|{3^a\choose3}$. Using similar arguements as in (i) it is straightforward to verify that $x_1^{3^a}=1$ and $y_1^{3^a}=1$.
\end{case}

\begin{case}[iii]$p=2$ and $1\leq c\leq b\leq d-1=a-1$.\par
Note that $2^{a-1}|{2^a\choose 2}$ and $2^{a}|{2^a\choose 3}$. Using similar arguements as in (i) it is straightforward to verify that $x_1^{2^a}=1$ and $y_1^{2^a}=1$.
\end{case}

\begin{case}[iv] $p=2$ and $1\leq c\leq b\leq e-1=a-2$.\par
 Using similar arguements as in (i) it is easy to prove that $x_1^{2^a}=y_1^{2^a}=1.$ Since $2^{a-2}|{2^{a-1}\choose2}$ and $2^{a-1}|{2^{a-1}\choose3}$, we have \[x_1^{2^{a-1}}=x^{i_12^{a-1}}y^{i_22^{a-1}}=u^{i_12^{c-1}}v^{i_22^{c-1}}\] and \[u_1^{2^{c-1}}=u^{Li_12^{c-1}}v^{Li_22^{c-1}}=u^{i_12^{c-1}}v^{i_22^{c-1}}.\] So $x_1^{2^{a-1}}=u_1^{2^{c-1}}$. Similarly, $y_1^{2^{a-1}}=v_1^{2^{c-1}}$.
\end{case}

\begin{case}[v]$p=2$ and $1\leq c\leq b-1=d-1=a-2$.\par
  Using similar arguements as in (i) it is easy to prove that $x_1^{2^a}=y_1^{2^a}=1.$ Since $2^{a-2}|{2^{a-1}\choose2}$ and $2^{a-1}|{2^{a-1}\choose3}$, we have
\begin{align*}
x_1^{2^{a-1}}&=x^{i_12^{a-1}}y^{i_22^{a-1}}z^{-i_1i_22^{a-2}(2^{a-1}-1)}\\
&=z^{(i_1+i_2-i_1i_2(2^{a-1}-1))2^{a-2}}=z^{2^{a-2}}
\end{align*}
and
 \[z_1^{2^{a-2}}=z^{(i_1j_2-i_2j_1)2^{a-2}}=z^{2^{a-2}}.\]
Hence $x_1^{2^{a-1}}=z_1^{2^{a-2}}$. Similarly, $y_1^{2^{a-1}}=z_1^{2^{a-2}}$.
\end{case}

\begin{case}[vi]$p=2$ and $1\leq c\leq b-1=d-1=a-2$.\par
  Using similar arguements as in (i) it is easy to prove that $x_1^{2^a}=y_1^{2^a}=1.$ Since $2^{a-2}|{2^{a-1}\choose2}$ and $2^{a-1}|{2^{a-1}\choose3}$, we have
\begin{align*}
x_1^{2^{a-1}}&=x^{i_12^{a-1}}y^{i_22^{a-1}}z^{-i_1i_22^{a-2}(2^{a-1}-1)}=z^{(i_1+i_2-i_1i_2(2^{a-1}-1))2^{a-2}}u^{i_12^{c-1}}v^{i_22^{c-1}}\\
&=z^{2^{a-2}}u^{i_12^{c-1}}v^{i_22^{c-1}}
\end{align*}
and \[z_1^{2^{a-2}}u_1^{2^{c-1}}=z^{(i_1j_2-i_2j_1)2^{a-2}}u^{Li_12^{c-1}}v^{Li_22^{c-1}}=z^{2^{a-2}}u^{i_12^{c-1}}v^{i_22^{c-1}}.\]
Hence $x_1^{2^{a-1}}=z_1^{2^{a-2}}u_1^{2^{c-1}}$. Similarly, $y_1^{2^{a-1}}=z_1^{2^{a-2}}v_1^{2^{c-1}}$.
\end{case}

Finally, to prove the isomorphism relations between the groups we summarize the invariant types of $G'$ and $G^{\mathrm{ab}}=G/G'$ as follows.
\[
\begin{array}{llll}
\text{Case} & G' &  G^{\mathrm{ab}} & \text{Condition}\\
\hline
{\rm(i)} &C_{p^c}\times C_{p^c}\times C_{p^b} & C_{p^a}\times C_{p^a} & 1\leq c\leq b\leq a\\
{\rm(ii)} &C_{3^c}\times C_{3^c}\times C_{3^b} & C_{3^a}\times C_{3^a} & \text{$1\leq c< b\leq a$ or $1\leq c\leq b\leq a-1$}\\
{\rm(iii)} &C_{2^c}\times C_{2^c}\times C_{2^b} & C_{2^a}\times C_{2^a} & 1\leq c\leq b\leq a-1\\
{\rm(iv)} &C_{2^c}\times C_{2^c}\times C_{2^b} & C_{2^{a-1}}\times C_{2^{a-1}} & 1\leq c\leq b\leq a-2\\
{\rm(v)} &C_{2^c}\times C_{2^c}\times C_{2^{a-1}} & C_{2^{a-1}}\times C_{2^{a-1}} & 1\leq c\leq a-2\\
{\rm(vi)}&C_{2^c}\times C_{2^c}\times C_{2^{a-1}} & C_{2^{a-1}}\times C_{2^{a-1}} &1\leq c\leq a-2
\end{array}
\]
It is clear that the groups are pairwise non-isomorphic, except possibly the groups from (v) and (vi). But each of the groups has a unique presentation, so by comparing the presentations we find that every group from (v) is not isomorphic to any group from (vi), as claimed.
\end{proof}

\begin{corollary}\label{Extra}
Let $G$ be a group from Theorem~\ref{MAIN}. Then the size of the group $G$ and its automorphism group $\Aut(G)$, and the type and genus of the associated totally symmetric dessin $D$ with $\Aut(D)\cong G$ are summarized as follows.
\[
\begin{array}{l|llll}\label{TT}
$G$ & |G|              &  |\Aut(G)|             & \text{Type of $D$}&\text{Genus of $D$} \\
\hline
{\rm (i)} &p^{2(a+c)+b}& p^{4a+2b+4c-3}(p+1)(p-1)^2 & (p^a,p^a,p^a)      &p^{a+b+2c}(p^a-3)/2+1\\
{\rm (ii)} &3^{2(a+c)+b}& 2^4\cdot3^{4a+2b+4c-3} & (3^a,3^a,3^a)      &3^{a+b+2c+1}(3^{a-1}-1)/2+1\\
{\rm (iii)} &2^{2(a+c)+b}& 3\cdot2^{4a+2b+4c-3} & (2^a,2^a,2^a)      &2^{a+b+2c-1}(2^a-3)+1\\
{\rm (iv)} &2^{2(a+c)+b-2}& 3\cdot2^{4a+2b+4c-7} & (2^a,2^a,2^a)      &2^{a+b+2c-3}(2^a-3)+1\\
{\rm (v)} &2^{3a+2c-3}&  3\cdot2^{6a+4c-9} & (2^a,2^a,2^a)      &2^{2a+2c-4}(2^a-3)+1\\
{\rm (vi)} &2^{3a+2c-3}& 3\cdot2^{6a+4c-9} & (2^a,2^a,2^a)      &2^{2a+2c-4}(2^a-3)+1\\
\end{array}
\]
\end{corollary}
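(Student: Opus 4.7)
The plan is to compute all four quantities directly from the presentations supplied by Theorem~\ref{MAIN}, reusing the commutator machinery of Corollary~\ref{META6} and Corollary~\ref{META7}.

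For $|G|$, I would rely on the normal series $1 \leq G_3 \leq G_2 \leq H \leq G$ constructed in the proof of Theorem~\ref{MAIN}: the quotients $G_2/G_3 = \langle zG_3\rangle$, $H/G_2 = \langle xG_2\rangle$, and $G/H = \langle yH\rangle$ are cyclic of orders $p^b$, $p^d$, $p^d$ respectively, while $|G_3| = p^{2c}$ by Lemma~\ref{DISJOINT}(ii), giving $|G| = p^{2c+b+2d}$. Substituting the case-specific value of $d$ from Corollary~\ref{ALLCASE} (namely $d=a$ in (i)-(iii), $d=a-1$ in (iv)-(vi), and additionally $b=a-1$ in (v), (vi)) reproduces the tabulated orders.

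For $|\Aut(G)|$ I would invoke Burnside's Basis Theorem. The proof of Theorem~\ref{MAIN} already shows that $\Aut(G)$ acts transitively on the ordered generating pairs of $G$; the action is free because any endomorphism is determined by the images of a generating set, so $|\Aut(G)|$ equals the number of generating pairs. By Lemma~\ref{BURNSIDE} a pair generates $G$ iff its image generates $G/\Phi(G) \cong C_p\times C_p$, and there are $(p^2-1)(p^2-p) = p(p+1)(p-1)^2$ such ordered pairs in $C_p\times C_p$; each lifts in $|\Phi(G)|^2 = (|G|/p^2)^2$ ways, yielding $|\Aut(G)| = |G|^2(p+1)(p-1)^2/p^3$, which specialises to the tabulated values.

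For the type and genus, Lemma~\ref{DISJOINT}(i) gives $o(x) = o(y) = p^a$, so only $o(xy)$ needs work. I would expand $(xy)^{p^a}$ and $(xy)^{p^{a-1}}$ via Corollary~\ref{META6}, substitute the case-specific values of $x^{p^d}$ and $y^{p^d}$ from Theorem~\ref{MAIN}, and apply the standard $p$-adic valuations of $\binom{p^a}{2}$ and $\binom{p^a}{3}$ to show $(xy)^{p^a}=1$ and $(xy)^{p^{a-1}}\ne 1$, producing $o(xy)=p^a$ in every case. The genus then follows from the Euler-Poincar\'e formula with $(l,m,n)=(p^a,p^a,p^a)$, rearranging to $g = |G|(p^a-3)/(2p^a)+1$.

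The main obstacle will be verifying $(xy)^{p^{a-1}} \ne 1$ in the $p=2$ cases (iv)-(vi), where the leading term $x^{2^{a-1}}y^{2^{a-1}}$ already lies in $G'$ and could in principle cancel the commutator contributions. Resolving this requires the precise relations from Theorem~\ref{MAIN} (e.g.\ $x^{2^{a-1}}y^{2^{a-1}}=u^{2^{c-1}}v^{2^{c-1}}$ or $z^{2^{a-2}}\cdot z^{2^{a-2}}$ together with the $z$-contribution of $-\binom{2^{a-1}}{2}$ modulo $2^{a-1}$) combined with Lemma~\ref{DISJOINT}(ii) to confirm that the surviving element is non-trivial.
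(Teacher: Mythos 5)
Your proposal is correct, and since the paper states Corollary~\ref{Extra} without any proof, your computation is exactly the direct verification the authors leave implicit: the normal series $1\leq G_3\leq G_2\leq H\leq G$ with Lemma~\ref{DISJOINT}(ii) gives $|G|=p^{b+2c+2d}$, the regular action of $\Aut(G)$ on generating pairs together with Burnside's Basis Theorem gives $|\Aut(G)|=(p^2-1)(p^2-p)\,|\Phi(G)|^2=|G|^2(p+1)(p-1)^2/p^3$, and the Euler--Poincar\'e formula with type $(p^a,p^a,p^a)$ yields the tabulated genera. The delicate step you flag does go through: by Corollary~\ref{META6} one gets $(xy)^{2^{a-1}}=u^{2^{c-1}}v^{2^{c-1}}$ in case (iv), $z^{2^{a-2}}$ in case (v) (the surviving contribution being $z^{-\binom{2^{a-1}}{2}}\equiv z^{2^{a-2}}$ since $x^{2^{a-1}}y^{2^{a-1}}=z^{2^{a-1}}=1$), and $z^{2^{a-2}}u^{2^{c-1}}v^{2^{c-1}}$ in case (vi), each nontrivial by Lemma~\ref{DISJOINT}(ii), so $o(xy)=2^a$ as required.
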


\begin{remark}
In \cite[Research problems and themes I 35(a)]{BJ2008}, Berkovich and Janko posed a problem of studying $p$-group $G$ with $|G:\Phi(G)|=p^d$ satisfying 
\[
|\Aut(G)|=(p^d-1)(p^d-p)\ldots(p^d-p^{d-1})|\Phi(G)|^d,
\]
that is, $G$ is a $d$-generator $p$-group whose automorphism group $\Aut(G)$ acts transitively on its generating $d$-tuples. Theorem~\ref{MAIN} and Corollary~\ref{Extra} is an answer to this problem for $d=2$ and $c(G)=3$.
\end{remark}

\section*{Acknowledgement}
The first and third author were supported by Zhejiang Provincial Natural Science Foundation of China under Grant No. LY16A010010,  and by Scientific Research Foundation(SRF) of Zhejiang Ocean University (21065014015, 21065014115). The second author was supported by the following grants: VEGA 1/0150/14, APVV-0223-10, and the grant APVV-ESF-EC-0009-10 within the EUROCORES Programme EUROGIGA (Project GReGAS) of the European Science Foundation and the Slovak-Chinese bilateral grant APVV-SK-CN-0009-12.



\begin{thebibliography}{99}


\bibitem{Belyi1979} G.V. Bely\v{\i}, Galois extensions of a maximal cyclotomic field, Izv. Akad. Nauk. SSSR Ser. Mat. 43 (1979) 267--276
\bibitem{BJ2008}Y. Berkovich, Z. Janko, Groups of Prime Power Order (Vol I), Walter de Gruyter \& Co, 2008
\bibitem{BJ2001}A. Breda d’Azevedo, G.A. Jones, Platonic hypermaps, Contributions to Algebra and Geometry 42(1) (2001) 1--37
\bibitem{CDNS}M.D.E. Conder, S.-F. Du, R. Nedela, M. \v{S}koviera, Bounding the size of a regular map with nilpotent automorphism group, in preparation.
\bibitem{CJSW2013} M.D.E. Conder, G.A. Jones, M. Streit, J. Wolfart, Galois actions and regular dessins of small genera, Rev. Mat. Iberoam. 29 (2013) 163--181
\bibitem{CS1988}D. Corn, D. Singerman, Regular hypermaps, European J. Combin. 9 (1988) 337--351
\bibitem{CJSW2009}A.D. Coste, G.A. Jones, M. Streit, J. Wolfart, Generalised Fermat hypermaps and Galois orbits, Glasgow Math. J. 51 (2) (2009) 289--299

 \bibitem{Du}S.F. Du, et al, Regular embeddings of $K_{n,n}$ where $n$ is a power of $2$. I: Metacyclic case, European Journal of Combinatorics 28(6) (2007) 1595--1609
\bibitem{GJ2013}G. Gonz\'alez-Diez, A. Jaikin-Zapirain, The absolute Galois group acts faithfully on regular dessins and on Beauville surfaces, preprint, 2013
\bibitem{Grothendieck1997} A. Grothendieck, Esquisse d'un programme, in: L. Schneps and P. Lochak (eds.),
Geometric Galois actions 1. The inverse Galois problem, Moduli Spaces and Mapping Class Groups, London Math. Soc. Lecture Notes Ser. 242, pp. 5--48, Cambridge Univ. Press, Cambridge, 1997
\bibitem{HNW2014}K. Hu, R. Nedela, N.-E Wang, Nilpotent dessins: Decomposition theorem and classification of the abelian dessins, preprint.
\bibitem{HNW2014-3}K. Hu, R. Nedela, N.-E. Wang, Nilpotent groups of class two which underly a unique regular dessin, Geom. Dedicate, DOI 10.1007/S10711-015-0074-8
 \bibitem{Huppert1967}B. Huppert, Endliche Gruppen (Vol. 1), Springer-Verlag, Berlin, 1967
\bibitem{Jones2010}G.A. Jones, Regular embeddings of complete bipartite graphs: classification and enumeration, Proc. Lond. Math. Soc. 101 (3) (2010) 427--453
\bibitem{Jones2013}G.A. Jones, Regular dessins with a given automorphism group. In: Izquierdo, M., Broughton, S.A., Costa, A.F., Rodr\'{\i}guez, R.E., (eds) Riemann and Klein surfaces,  Automorphisms, Symmeries and Moluli Spaces. American Math. Soc., Providence, Rhode Island (2014)
\bibitem{JP2010}G.A. Jones, D. Pinto, Hypermap operations of finite order, Discrete Math. 310 (2010) 1820--1827
\bibitem{JS1996}G.A. Jones, D. Singerman, Bely\v{\i} functions, hypermaps and Galois groups, Bull. London Math. Soc. 28 (1996) 561--590
\bibitem{JSW2007} G.A. Jones, M. Streit, J. Wolfart, Galois action on families of generalised Fermat curves, J. Algebra, 307 (2007) 829--840
\bibitem{JSW2010}G.A. Jones, M. Streit, J. Wolfart, Wilson's map operations on regular dessins and cyclotomic fields of definition, Proceedings of the London Math. Soc. 100 (2) (2010) 510--532


\bibitem{MNS2012} A. Malni\v{c}, R. Nedela, M. \v{S}koviera, Regular maps with nilpotent automorphism groups, European J. Combin. 33 (8) (2012) 1974--1986
    \bibitem{Schneps1994} L. Schneps, Dessins d'enfants on the Riemann sphere, in The Grothendieck Theory of Dessins d'Enfants (Luminy, 1993), 47--77, London Math. Soc. Lecture Note Ser. 200, Cambridge Univ. Press, 1994
\bibitem{Siran2013}J. Siran, How symmetric can maps on surfaces be? in: Surveys in combinatorics 2013, London Math. Soc. Lecture Note Ser. 409, Cambridge Univ. Press, Cambridge, 2013, 161-238
\bibitem{XQ2010}M.-Y. Xu, H.-P. Qu, Fnite $p$-Groups, Peking University Press, Beijing, 2010
\end{thebibliography}
\end{document}